\newcounter{name}
\theoremstyle{plain}
\newtheorem{theorem}{Theorem}[section]
\newtheorem{lemma}[theorem]{Lemma}
\newtheorem{question}[theorem]{Question}
\newtheorem{corollary}[theorem]{Corollary}
\newtheorem{proposition}[theorem]{Proposition}
\newtheorem{claim}[name]{Claim}
\theoremstyle{definition}
\newtheorem{definition}[theorem]{Definition}
\newtheorem{example}[theorem]{Example}
\newcommand{\betrag}[1]{\vert{#1}\vert}
\newcommand{\height}[1]{{\rm{ht}}(#1)}
\newcommand{\lub}{{\rm{lub}}}
\newcommand{\cof}[1]{{{\rm{cof}}(#1)}}
\newcommand{\rank}[2]{{\rm{rnk}}_{#2}({#1})}
\newcommand{\map}[3]{{#1}:{#2}\longrightarrow{#3}}
\newcommand{\Map}[5]{{#1}:{#2}\longrightarrow{#3};~{#4}\longmapsto{#5}}
\newcommand{\Set}[2]{\left\{{#1}~\vert~{#2}\right\}}
\newcommand{\BigSet}[2]{\big\{{#1}~\vert~{#2}\big\}}
\newcommand{\seq}[2]{\langle{#1}~\vert~{#2}\rangle}
\newcommand{\eins}{ {1{\rm\hspace{-0.5ex}l}} }
\newcommand{\Add}[2]{{\rm{Add}}({#1},{#2})}
\newcommand{\id}{{\rm{id}}}
\newcommand{\LL}{{\rm{L}}}
\newcommand{\KK}{{\rm{K}}}
\newcommand{\ZFC}{{\rm{ZFC}}}
\newcommand{\DDD}{{\mathbb{D}}}
\newcommand{\III}{{\mathbb{I}}}
\newcommand{\TTT}{{\mathbb{T}}}
\newcommand{\ZZZ}{{\mathbb{Z}}}
\newcommand{\VV}{{\rm{V}}}
\newcommand{\calG}{\mathcal{G}}
\newcommand{\calL}{\mathcal{L}}
\newcommand{\calM}{\mathcal{M}}
\newcommand{\Aut}[1]{{{\rm{Aut}}({#1})}}
\newcommand{\Sym}[1]{{{\rm{Sym}}({#1})}}
\newcommand{\Fin}[1]{{{\rm{Fin}}({#1})}}
\title{Free groups and automorphism groups of infinite fields}
\author{Philipp L\"ucke}
\address{Mathematisches Institut\\Rheinische Friedrich-Wilhelms-Universit\"at Bonn\\En\-de\-nicher Allee 60\\53115 Bonn\\Germany}
\email{pluecke@math.uni-bonn.de}
\author{Saharon Shelah}
\address{Institute of Mathematics, The Hebrew University of Jerusalem, Einstein Institute of Mathematics, Edmond J. Safra Campus, Givat Ram, Jerusalem 91904, Israel, \and Department of Mathematics, Hill Center-Busch Campus, Rutgers, The State University of New Jersey, 110 Frelinghuysen Road, Piscataway, NJ 08854-8019, USA}
\email{shelah@math.huji.ac.il}
\subjclass[2010]{Primary 03E75, 20E05, 20F29; Secondary 03E35, 12L99}
\keywords{Automorphism groups, free groups}
\thanks{The second author would like to thank the Israel Science Foundation for partial support of this research (Grant No. 1053/11). This is item \#1014 on Shelah's list of publications.}
\begin{document}

\begin{abstract}
Let $\lambda$ be a cardinal with $\lambda=\lambda^{\aleph_0}$ and $p$ be either $0$ or a prime number. We show that there are fields $K_0$ and $K_1$ of cardinality $\lambda$ 
 and characteristic $p$ such that the automorphism group of $K_0$ is a free group of cardinality $2^\lambda$ and the automorphism group of $K_1$ is a free abelian group of 
cardinality $2^\lambda$. 
 This partially answers a question from \cite{MR1736959} and complements results from \cite{MR1934424}, \cite{MR2773054} and \cite{MR1720580}. 
The methods developed in the proof of the above statement also allow us to show that the above cardinal arithmetic assumption is consistently not necessary for the existence 
of such fields and that 
the existence of a cardinal $\lambda$ of uncountable cofinality with the property that there is no field of cardinality $\lambda$ whose automorphism group 
is a free group of cardinality greater than $\lambda$ implies the existence of large cardinals in certain inner models of set theory.
\end{abstract}
\maketitle


\section{Introduction}\label{section:Introduction}

The work of this paper is motivated by questions of the following type: \emph{given an abstract group $G$ and an infinite cardinal $\lambda$, is $G$ isomorphic to the 
automorphism group of a field of cardinality $\lambda$?} We start by presenting some known results related to this kind of problem.

If $K$ is an infinite field of cardinality $\lambda$, then the group $\Aut{K}$ consisting of all automorphisms of $K$ can be embedded into the group $\Sym{\kappa}$ 
of all permutations of $\lambda$  and therefore has cardinality at most $2^\lambda$. 
It is well known (see \cite{MR660867}, \cite{Sh913} and Section \ref{section:MI} of this paper) that, given an infinite cardinal $\lambda$, 
a first-order language $\calL$ of cardinality at most $\lambda$ and an $\calL$-model $\calM$ of cardinality at most $\lambda$, 
there is a field $K$ of arbitrary characteristic and cardinality $\lambda$ whose automorphism group is isomorphic to $\Aut{\calM}$.
Given an infinite group $G$ of cardinality $\lambda$, it is easy to construct a first-order language $\calL$ of cardinality $\lambda$ and an $\calL$-model $\calM$ 
of cardinality $\lambda$ such that the groups $G$ and $\Aut{\calM}$ are isomorphic. 
In particular, every infinite group is isomorphic to the automorphism group of a field of the same cardinality. 
In contrast, for every infinite cardinal $\lambda$ there are groups of cardinality $\lambda^+$ that are not isomorphic to automorphism groups of fields of cardinality $\lambda$. 
For example, De Bruijn showed in {\cite[Theorem 5.1]{MR0098127}} that the group $\Fin{\lambda^+}$ consisting of all 
finite permutations of $\lambda^+$ cannot be embedded into the group $\Sym{\lambda}$.

In this paper, we focus on free groups and the following instances of the above problem.

\begin{question}\label{question:Motiv}
 Is there a field $K$ whose automorphism group is a free group of cardinality greater than the cardinality of $K$?

More specifically, given an infinite cardinal $\lambda$, is there a field of cardinality $\lambda$ whose automorphism group is a free group of cardinality greater than $\lambda$? 
\end{question}

The above question was first asked by David Evans for the case $\lambda=\aleph_0$. 
The results of \cite{MR1736959} motivate its generalizations to uncountable cardinalities.

The following results due to the second author show that the second part of Question \ref{question:Motiv} 
has a negative answer for $\lambda=\aleph_0$ and singular strong limit cardinals of countable cofinality.

 \begin{theorem}[{\cite[Theorem 1]{MR1934424}}]\label{theorem:ST1}
 Let $\calL$ be a countable first-order language and $\calM$ be a countable $\calL$-model. Then $\Aut{\calM}$ is not an uncountable free group.
\end{theorem}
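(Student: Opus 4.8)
Suppose, towards a contradiction, that $G=\Aut{\calM}$ is a free group of uncountable rank. Since $\calM$ is countable, its underlying set may be identified with a countable set, so $G$ is a subgroup of the full symmetric group $\Sym{\calM}$. Equipping $\Sym{\calM}$ with the topology of pointwise convergence turns it into a Polish group, and $\Aut{\calM}$ is closed in this topology, being the set of permutations that respect each of the countably many atomic relations and functions of $\calL$. Hence $G$ is itself a Polish group, i.e.\ a separable completely metrizable topological group. As $G$ is assumed uncountable, it is not discrete, because a discrete separable metrizable space is countable.

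The heart of the argument is an automatic-continuity phenomenon for free targets, due to Dudley. Recall that a free group $F$ carries its word-length norm $N$, for which $N(h)\geq 1$ whenever $h\neq 1$ and, crucially, $N(h^{n})\to\infty$ as $\betrag{n}\to\infty$ for every $h\neq 1$ (conjugating $h$ to a cyclically reduced word shows that $N(h^{n})$ grows linearly in $\betrag{n}$). I will use the following statement: if $H$ is a group equipped with a norm $N$ satisfying $N(h^{n})\to\infty$ for all $h\neq 1$, then every abstract group homomorphism $\map{\varphi}{G}{H}$ out of a completely metrizable topological group $G$ is continuous for the metric $d(g,h)=N(g^{-1}h)$ on $H$. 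For a free group this metric induces the discrete topology, since $N\geq 1$ off the identity, so continuity of $\varphi$ simply means that $\ker\varphi$ is open in $G$. Applying this with $H=G=\Aut{\calM}$ and $\varphi=\id_{G}$ yields that $\ker(\id_{G})=\{1\}$ is open in $G$; thus $G$ is discrete and therefore countable, contradicting our assumption. The same argument applies verbatim to free abelian targets, which is what is needed for the companion assertions about free abelian automorphism groups.

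\textbf{The main obstacle.} The substantive point is the automatic-continuity statement, and this is where completeness of $G$ and the growth condition $N(h^{n})\to\infty$ are both essential. The plan for it is a Baire-category and infinite-product argument: if $\ker\varphi$ were not open, then $\varphi$ would fail to be continuous at $1$, so one could extract a sequence $\seq{g_{k}}{k<\omega}$ converging to $1$ in $G$ with $\varphi(g_{k})\neq 1$, hence $N(\varphi(g_{k}))\geq 1$. Using complete metrizability of $G$ one then chooses exponents $m_{k}$ growing fast enough that a suitably interleaved infinite product of the powers $g_{k}^{m_{k}}$ converges to some $g\in G$; by construction $\varphi(g)$ is an element of the free group whose norm is forced to exceed every finite bound, since it inherits the large norms of the factors $\varphi(g_{k}^{m_{k}})=\varphi(g_{k})^{m_{k}}$ through the growth condition. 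As every element of a free group has finite norm, this is a contradiction, and hence $\ker\varphi$ is open. Making the interleaving and the norm bookkeeping precise, so that the contributions of the individual factors cannot cancel away in the limit, is the delicate part of the argument.
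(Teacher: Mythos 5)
The paper itself offers no proof of this statement: it is imported verbatim from \cite{MR1934424}, where it is established by a direct combinatorial analysis of $\Aut{\calM}$ (a method which, as Theorem \ref{theorem:ST2} shows, also extends to certain uncountable languages and singular strong limit cardinals of countable cofinality, where the group is no longer Polish). Your route is genuinely different and, in outline, legitimate: you reduce the theorem to the assertion that an uncountable Polish group cannot be abstractly free, and you derive that from an automatic continuity theorem of Dudley. The reduction is sound --- $\Aut{\calM}$ is a closed subgroup of the symmetric group of the countable universe of $\calM$ and hence Polish, a separable discrete space is countable, and Dudley's 1961 theorem (every homomorphism from a completely metrizable or locally compact group into a discrete free or free abelian group is continuous) applied to the identity map does force discreteness. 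If you are permitted to cite Dudley as a black box, the argument is complete, and it is the classical way to see this result; what the combinatorial proof of \cite{MR1934424} buys in exchange is applicability beyond the Polish setting.

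The problem is that you undertake to prove the automatic continuity statement rather than cite it, and your sketch has a genuine gap at exactly the step you flag as delicate. Writing $g=P_k\,g_k^{m_k}\,Q_k$ with $P_k$ the finite head and $Q_k$ the infinite tail, subadditivity of the norm gives only $N(\varphi(g_k)^{m_k})\leq N(\varphi(P_k))+N(\varphi(g))+N(\varphi(Q_k))$; since $\varphi$ is precisely not assumed continuous, $N(\varphi(Q_k))$ is an uncontrolled finite quantity that can absorb the large norm of the middle factor, so $\varphi(g)$ does not ``inherit the large norms of the factors'' by any bookkeeping of this naive kind, and choosing the $m_k$ to grow fast does not by itself help, because the needed lower bound on $m_k$ depends on tails that are determined only after all later exponents are fixed. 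Dudley's actual proof requires a further idea --- a recursive choice of exponents played off against the finitely many possible heads together with the uniform estimate $N(h^{n})\geq\betrag{n}$ valid for every $h\neq 1$ in a free or free abelian group. Note also that your hypothesis ``$N(h^{n})\to\infty$ for each fixed $h\neq 1$'' is strictly weaker than this uniform bound and would not support the argument even once the tail issue is repaired. As written, the proposal therefore proves the theorem only modulo an external citation, and the self-contained argument it attempts is incomplete at its crux.
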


\begin{theorem}[{\cite[Remark 5.2]{MR1934424}}]\label{theorem:ST2}
 Let $\seq{\lambda_n}{n<\omega}$ be a sequence of infinite cardinals with $2^{\lambda_n}<2^{\lambda{n+1}}$ for all $n<\omega$, $\lambda=\sum_{n<\omega}\lambda_n$ and 
$\mu=\sum_{n<\omega}2^{\lambda_n}$. If $\calL$ is a first-order language of cardinality $\lambda$ and $\calM$ is an $\calL$-model of cardinality $\lambda$ such that 
$\Aut{\calM}$ has cardinality greater than $\mu$, then $\Aut{\calM}$ is not a free group.
\end{theorem}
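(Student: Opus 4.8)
The plan is to argue by contradiction: assume $G=\Aut{\calM}$ is a free group with $\betrag{G}>\mu$, and derive a contradiction by confronting the free-group structure of $G$ with the strong completeness properties that every automorphism group of a model enjoys. Since $\calM$ has cardinality $\lambda=\sum_{n<\omega}\lambda_n$, the first step is to realize $G$ as a closed subgroup of $\Sym{\lambda}$ in the topology of pointwise convergence. Fix an increasing sequence $\seq{A_n}{n<\omega}$ of subsets of the universe with $\betrag{A_n}=\lambda_n$ whose union is the whole universe, and let $G_n$ be the pointwise stabilizer of $A_n$. Then $\seq{G_n}{n<\omega}$ is a decreasing chain of subgroups forming a neighbourhood base at the identity, with $\bigcap_{n<\omega}G_n=\{1\}$. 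The decisive structural feature is $\omega$-completeness: because $G$ is closed in $\Sym{\lambda}$, whenever $h_n\in G_n$ for all $n$ the partial products of the $h_n$ form a Cauchy sequence and converge to an element $\prod_{n<\omega}h_n\in G$. Thus $G$ carries a non-discrete, Hausdorff, $\omega$-complete non-Archimedean group topology.

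Next I would exploit freeness against this completeness together with a cardinality count. Write $G=F(X)$ for a free basis $X$, so $\betrag{G}=\betrag{X}>\mu$. Since $\bigcap_{n<\omega}G_n=\{1\}$, every $x\in X$ with $x\neq 1$ moves some point and hence lands in a least $A_n$; this partitions $X=\bigsqcup_{n<\omega}X_n$, where $X_n=\Set{x\in X}{x\restriction A_{n-1}=\id,\ x\restriction A_n\neq\id}$ (with $A_{-1}=\emptyset$), equivalently $X_n=X\cap(G_{n-1}\setminus G_n)$ on setting $G_{-1}=G$. The goal is the level bound $\betrag{X_n}\le 2^{\lambda_n}$; summing then yields $\betrag{X}\le\sum_{n<\omega}2^{\lambda_n}=\mu$, contradicting $\betrag{X}>\mu$. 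The number $2^{\lambda_n}$ is intended to arise because the data distinguishing a basis element of $X_n$ is its behaviour over $A_n$, of which there are at most $2^{\lambda_n}$ many (note $\betrag{\Sym{\lambda_n}}=2^{\lambda_n}$), and the whole point of the hypothesis $2^{\lambda_n}<2^{\lambda_{n+1}}$ is to let these successive bounds assemble into $\mu$.

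The hard step is the level bound, and more fundamentally the lemma that a free group cannot support such an $\omega$-complete topology once it is too wide. The naive attempt — take distinct basis elements $x_k\in G_{m_k}$ with $m_k\to\infty$, form $g=\prod_{k<\omega}x_k\in F(X)$, and object that an infinite reduced word cannot equal a finite one — fails, because the topology coming from $\calM$ is a priori unrelated to the word metric, so the limit $g$ need retain no initial segment of the $x_k$. The genuine argument must instead use Nielsen–Schreier rigidity (subgroups of free groups are free, with control of reduced lengths) to show that a free group exhibiting $2^{\lambda_n}$-fold independence at cofinally many levels of an $\omega$-complete chain would be forced to contain convergent products incompatible with every element having finite reduced length.

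I expect this rigidity lemma — equivalently, the assertion that a free group carries no non-trivial $\omega$-complete non-Archimedean group topology of width exceeding $\mu$ — to be the main obstacle. It is also precisely the point at which $\cof{\lambda}=\omega$ is essential: the chain $\seq{G_n}{n<\omega}$ has length $\omega$, and it is this that manufactures the convergent products. This is in harmony with the positive results of the paper, where free automorphism groups of size $2^\lambda$ are produced under $\lambda=\lambda^{\aleph_0}$ (which forces $\cof{\lambda}>\omega$), so that no such $\omega$-indexed completeness is available to obstruct freeness.
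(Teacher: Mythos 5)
First, a point of order: the paper contains no proof of this statement --- it is quoted as a known result from {\cite[Remark 5.2]{MR1934424}} --- so your attempt has to be judged as a reconstruction of Shelah's argument rather than against anything internal to this paper. Your framework is the right one and matches the cited source: realize $\Aut{\calM}$ as a closed subgroup of $\Sym{\lambda}$, take the decreasing chain $\seq{G_n}{n<\omega}$ of pointwise stabilizers of an increasing cover $\seq{A_n}{n<\omega}$ with $\betrag{A_n}=\lambda_n$, and play the resulting completeness (convergence of products $\prod_{n<\omega}h_n$ with $h_n\in G_n$) against freeness. But the proposal has two genuine gaps. The smaller one is the level bound $\betrag{X_n}\leq 2^{\lambda_n}$: your justification counts ``behaviours over $A_n$'' as if they were permutations of $A_n$, but an automorphism of $\calM$ need not map $A_n$ into itself, so the number of possible restrictions $\sigma\restriction A_n$ is bounded only by $\lambda^{\lambda_n}$, which can exceed $\mu$ (e.g.\ for $\lambda_n=\beth_n$ one has $\mu=\lambda=\beth_\omega$ while $\lambda^{\lambda_0}\geq\lambda^{\aleph_0}>\lambda$); moreover, two distinct basis elements with the same restriction to $A_n$ merely differ by an element of $G_n$, which is no contradiction, so injectivity of restriction on $X_n$ is not available either. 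As written, the level bound is unsupported.

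The decisive gap is the one you flag yourself: the assertion that a free group of cardinality greater than $\mu$ cannot carry such an $\omega$-complete chain of stabilizers is not a lemma you may invoke --- it \emph{is} the theorem. You explicitly concede that the naive infinite-product argument fails and that some unstated ``Nielsen--Schreier rigidity'' must do the real work, without supplying it. In Shelah's proof this step is the entire content: roughly, one constructs for each $\eta\in{}^{\omega}2$ a convergent product $g_\eta=\prod_{n<\omega}h_{\eta\restriction n}$ with factors chosen deep in the chain, and then exploits specific algebraic rigidity of free groups (every element has finite reduced length over any basis, commuting elements are powers of a common element, roots are unique) together with the hypothesis $2^{\lambda_n}<2^{\lambda_{n+1}}$ to show that the resulting family of limits is incompatible with freeness at the stated cardinality. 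None of that analysis appears in your write-up, so what you have is a correct reduction of the problem to its hardest step, not a proof.
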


In contrast, Just, Thomas and the second author showed in {\cite[Theorem 1.14]{MR1736959}} that, given a regular uncountable cardinal $\lambda$ with 
$\lambda=\lambda^{{<}\lambda}$ and $\nu>\lambda$, there is a cofinality preserving forcing extension of the ground model that adds no new sequences of ordinals 
of length less than $\lambda$ and contains a field of cardinality $\lambda$ whose automorphism group is a free group of cardinality $\nu$. 
In particular, it is consistent with the axioms of $\ZFC$ that the above question has a positive answer.

The following main result of this paper shows that the axioms of $\ZFC$ already imply a positive answer to the above question  
for large class of cardinals of uncountable cofinality.

\begin{theorem}\label{theorem:Main1}
 Let $\lambda$ be a cardinal with $\lambda=\lambda^{\aleph_0}$ and $p$ be either $0$ or a prime number. Then there is a field $K$ of characteristic $p$ and cardinality $\lambda$ 
whose automorphism group is a free group of cardinality $2^\lambda$.
\end{theorem}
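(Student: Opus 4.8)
The plan is to separate the problem into a purely group-theoretic/combinatorial core and a model-theoretic transfer step. By the interpretation results recalled in the introduction and developed in Section~\ref{section:MI} (going back to \cite{MR660867} and \cite{Sh913}), it suffices to produce, for each cardinal $\lambda$ with $\lambda=\lambda^{\aleph_0}$, a first-order language $\calL$ with $\betrag{\calL}\le\lambda$ together with an $\calL$-model $\calM$ of cardinality $\lambda$ such that $\Aut{\calM}$ is a free group of cardinality $2^\lambda$. The transfer step then hands us a field $K$ of the prescribed characteristic $p$ and cardinality $\lambda$ with $\Aut{K}\cong\Aut{\calM}$, so the characteristic causes no additional difficulty. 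I would take $\calM$ to be a structure in a small relational language, ideally a graph, since keeping $\betrag{\calL}\le\lambda$ is what the transfer requires.

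The heart of the argument is thus to realise a free group of rank $2^\lambda$ as the \emph{full} automorphism group of a structure whose universe has size only $\lambda$. The tension here is that $\Aut{\calM}$ is always a closed subgroup of $\Sym{\lambda}$ in the topology of pointwise convergence, and the naive ways of packing $2^\lambda$ automorphisms into a $\lambda$-sized structure --- for instance $\lambda$ many independent, commuting $\mathbb{Z}$- or $\mathbb{Z}/2$-blocks acted on simultaneously --- produce closed groups such as $(\mathbb{Z}/2)^\lambda$ or the Baer--Specker type group $\mathbb{Z}^\lambda$, which have cardinality $2^\lambda$ but are very far from free. So the mechanism that manufactures $2^\lambda$ automorphisms from a $\lambda$-sized base (acting on infinitely many independent gadgets at once) must be constrained so that the surviving group is free on an explicit basis of size $2^\lambda$, with no hidden relations and no unintended limit elements.

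Accordingly, I would build $\calM$ by a transfinite recursion, using $\lambda=\lambda^{\aleph_0}$ to supply a combinatorial scaffold --- a tree of countable sequences from $\lambda$, or an independent/almost disjoint family of the right size --- along which the construction runs. The recursion should simultaneously (i) attach, for the intended basis elements, rigid directed $\mathbb{Z}$-shaped configurations forcing the existence of the corresponding automorphisms; (ii) interleave these configurations so that every reduced word acts distinctly, yielding a faithful action of the abstract free group; and (iii) add enough rigid bookkeeping relations that any automorphism of $\calM$ must respect the scaffold and hence already lie among the prescribed elements, so that $\Aut{\calM}$ is exactly the group generated by the basis and nothing more. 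The design principle that should force freeness rather than a product group is to spread the action out over the $\aleph_0$-scaffold so that distinct reduced words already disagree on a finite part of the universe; this both certifies (via a ping-pong argument) that the generators satisfy no relations and prevents nontrivial pointwise limits of group elements from escaping the generated subgroup, reconciling the enforced closedness of $\Aut{\calM}$ with freeness of large rank.

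The step I expect to be the main obstacle is precisely this last reconciliation: verifying at once that the prescribed maps generate a free group of the full rank $2^\lambda$ (genuinely $2^\lambda$ independent generators, not merely $\lambda$) and that the closure operation forced by $\Aut{\calM}\le\Sym{\lambda}$ adds no new automorphisms. Obtaining rank exactly $2^\lambda$ while keeping $\betrag{\calM}=\lambda$ is the quantitative crux, and this is where I expect the hypothesis $\lambda=\lambda^{\aleph_0}$ to be used, namely to count the $2^\lambda$ many independent configurations supportable on the $\lambda$-sized scaffold and to guarantee the scaffold itself has size $\lambda$. The ruling out of limit automorphisms outside the free group --- the point at which every earlier attempt collapses into a full product --- is the delicate part that the whole construction must be arranged to control.
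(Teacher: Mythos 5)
Your reduction to a model $\calM$ of cardinality $\lambda$ with $\Aut{\calM}$ free of rank $2^\lambda$ matches the paper's first step, and you correctly locate the crux: getting $2^\lambda$ free generators out of a $\lambda$-sized universe while ensuring the closed group $\Aut{\calM}\leq\Sym{\lambda}$ contains nothing beyond the generated free group. But at exactly that point your proposal stops at a description of the difficulty rather than a mechanism for overcoming it, and the design principle you do commit to --- spreading the action so that ``distinct reduced words already disagree on a finite part of the universe,'' certified by ping-pong --- cannot work as stated: with $2^\lambda$ generators acting on $\lambda$ points, a counting argument shows that distinct generators must agree on arbitrarily large pieces of the structure, so no finite (or even countable) fragment can separate all reduced words. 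Moreover, ping-pong only certifies that the designated maps generate a free subgroup; it says nothing about why an arbitrary automorphism of $\calM$, in particular a pointwise limit of elements of that subgroup, must already lie in it. That is the step at which, by the paper's own Theorems \ref{theorem:ST1} and \ref{theorem:ST2}, every such plan provably fails for $\lambda=\aleph_0$, so whatever closes the gap must use uncountability in an essential, identifiable way --- and your sketch does not identify it.

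The paper's missing idea is to represent $\Aut{\calM}$ not as a closure of a generated subgroup but as an inverse limit: one takes the directed set $\DDD=\langle[\lambda]^{\aleph_0},\subseteq\rangle$, the inverse system $\III_\lambda$ with $A_u={}^u2$ (whose inverse limit is ${}^\lambda 2$, of size $2^\lambda$ --- this is where $\lambda=\lambda^{\aleph_0}$ enters, to keep $\betrag{D}$ and each $\betrag{A_u}$ at most $\lambda$), and the induced inverse system of free groups $F(A_u)$. Theorem \ref{theorem:ModelIAut} builds a model, and then a field, whose automorphism group is exactly this inverse limit, with no closure step left to control. Freeness of the inverse limit is then proved via the game $\calG(\DDD)$: since Player II has no winning strategy (because $[\lambda]^{\aleph_0}$ is countably directed), the reduced normal forms of any thread $(g_u)_{u}$ stabilize in length along $\DDD$, which pins down an explicit free basis indexed by the threads of $\III_\lambda$. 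This stabilization argument is the replacement for your ping-pong/finite-separation step, and it is the piece your proposal would need to supply.
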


Since the axioms of $\ZFC$ prove the existence of a cardinal $\lambda$ with $\lambda=\lambda^{\aleph_0}$, this results answers the first part of Question \ref{question:Motiv} positively. 
Moreover, a combination of the above results allows us to completely answer the second part of the question under certain cardinal arithmetic assumptions. 
The following corollary is an example of such an application.

\begin{corollary}\label{corollary:CHSCHAnswer}
 Assume that the \emph{Continuum Hypothesis} and the \emph{Singular Cardinal Hypothesis} hold. Then the following statements are equivalent for every infinite cardinal $\lambda$. 
\begin{enumerate}
 \item There is a field of cardinality $\lambda$ whose automorphism group is a free group of cardinality greater than $\lambda$.

 \item There is a cardinal $\kappa\leq\lambda$ with $2^\kappa>\lambda$ and $\cof{\kappa}>\omega$.
\end{enumerate}
\end{corollary}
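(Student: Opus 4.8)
The plan is to establish the two implications separately. The direction (ii)$\Rightarrow$(i) rests on Theorem~\ref{theorem:Main1} and the model-theoretic realization result quoted in the introduction, while the contrapositive of (i)$\Rightarrow$(ii) rests on Theorems~\ref{theorem:ST1} and~\ref{theorem:ST2}. The common combinatorial ingredient is a computation of the value of $\kappa^{\aleph_0}$ under the two hypotheses, so I would first record the auxiliary fact that, assuming the Continuum Hypothesis and the Singular Cardinal Hypothesis, every infinite cardinal $\kappa$ satisfies $\kappa^{\aleph_0}=\kappa$ if $\cof{\kappa}>\omega$ and $\kappa^{\aleph_0}=\kappa^+$ if $\cof{\kappa}=\omega$. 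This is proved by induction on $\kappa$: the base case $\aleph_0^{\aleph_0}=\aleph_1$ is the Continuum Hypothesis; when $\cof{\kappa}>\omega$ every countable sequence in $\kappa$ is bounded, so $\kappa^{\aleph_0}=\kappa\cdot\sup_{\alpha<\kappa}\betrag{\alpha}^{\aleph_0}=\kappa$ by the inductive hypothesis; and when $\cof{\kappa}=\omega$ the Singular Cardinal Hypothesis gives $\kappa^{\aleph_0}=\kappa^{\cof{\kappa}}=\kappa^+$, using that $2^{\aleph_0}=\aleph_1<\kappa$.

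For (ii)$\Rightarrow$(i), fix a cardinal $\kappa\leq\lambda$ with $2^\kappa>\lambda$ and $\cof{\kappa}>\omega$. The auxiliary fact yields $\kappa^{\aleph_0}=\kappa$, so Theorem~\ref{theorem:Main1} provides a field $K_0$ of cardinality $\kappa$ whose automorphism group is a free group of cardinality $2^\kappa>\lambda$. Regarding $K_0$ as a model of cardinality $\kappa\leq\lambda$ in the finite language of rings, the realization result then produces a field $K$ of cardinality exactly $\lambda$ with $\Aut{K}\cong\Aut{K_0}$, and this group is free of cardinality $2^\kappa>\lambda$, which is (i).

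For the contrapositive of (i)$\Rightarrow$(ii), assume (ii) fails, so that every cardinal $\kappa\leq\lambda$ satisfies $2^\kappa\leq\lambda$ or $\cof{\kappa}=\omega$. Taking $\kappa=\lambda$ and using $2^\lambda>\lambda$ forces $\cof{\lambda}=\omega$. If $\lambda=\aleph_0$, then Theorem~\ref{theorem:ST1} shows that no countable field has an uncountable free automorphism group, so (i) fails. If $\lambda$ is singular of countable cofinality, I would argue as follows. For every infinite $\rho<\lambda$ the successor $\rho^+$ is a regular uncountable cardinal below $\lambda$, so the failure of (ii) forces $2^{\rho^+}\leq\lambda$ and hence $2^\rho\leq\lambda$; thus $2^{<\lambda}=\lambda$. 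Moreover $2^\rho=\lambda$ is impossible for $\rho<\lambda$, since König's theorem would give $\omega=\cof{\lambda}=\cof{2^\rho}>\rho\geq\aleph_0$. Therefore $2^\rho<\lambda$ for all $\rho<\lambda$ while $\sup_{\rho<\lambda}2^\rho=\lambda$, so one can choose an increasing sequence $\seq{\lambda_n}{n<\omega}$ cofinal in $\lambda$ with $2^{\lambda_n}<2^{\lambda_{n+1}}$ for all $n<\omega$. For this sequence $\mu=\sum_{n<\omega}2^{\lambda_n}=\lambda$, and Theorem~\ref{theorem:ST2} shows that every field of cardinality $\lambda$ whose automorphism group has cardinality greater than $\mu=\lambda$ has a non-free automorphism group. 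Equivalently, a free automorphism group of such a field has cardinality at most $\lambda$, so (i) fails.

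The realization step and the cardinal-arithmetic induction are routine; the delicate point, which I expect to be the main obstacle, is the analysis of the continuum function in the singular case. There one must simultaneously push $2^\rho$ below $\lambda$ for every $\rho<\lambda$ using successor cardinals and the failure of (ii), and rule out the ``eventually equal to $\lambda$'' behaviour that would destroy the strict monotonicity required by Theorem~\ref{theorem:ST2}; the latter is precisely where König's theorem and $\cof{\lambda}=\omega$ must interact, and verifying that these constraints together force $\mu=\lambda$ rather than some value exceeding $\lambda$ is the crux of the argument.
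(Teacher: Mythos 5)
Your proof is correct and follows essentially the same route as the paper: $\kappa^{\aleph_0}=\kappa$ for $\cof{\kappa}>\omega$ under CH+SCH feeding into Theorem~\ref{theorem:Main1} plus the model-to-field realization for one direction, and Theorems~\ref{theorem:ST1} and~\ref{theorem:ST2} for the other. You are in fact slightly more explicit than the paper in verifying (via successor cardinals and K\"onig's theorem) that the failure of (ii) forces $\lambda$ to be $\aleph_0$ or a singular strong limit of countable cofinality, which the paper leaves implicit.
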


We outline the proof of Theorem \ref{theorem:Main1}: In Section \ref{section:MI}, we will show that it suffices to construct an inverse system groups satisfying certain 
cardinality assumptions whose inverse limit is a free group of large cardinality. We will construct such systems of groups assuming the existence of certain inverse systems of 
sets in Section \ref{section:freegroups}. Finally, we will use the assumption $\lambda=\lambda^{\aleph_0}$ to construct suitable inverse systems of sets 
in Section \ref{section:system}.

The methods developed in the proof of the above result also allow us to produce uncountable fields whose automorphism group is a free \emph{abelian} group of large cardinality.

\begin{theorem}\label{theorem:Main2}
 Let $\lambda$ be a cardinal with $\lambda=\lambda^{\aleph_0}$ and $p$ be either $0$ or a prime number. Then there is a field $K$ of characteristic $p$ and cardinality 
$\lambda$ whose automorphism group is a free abelian group of cardinality $2^\lambda$.
\end{theorem}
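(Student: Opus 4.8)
The plan is to mirror the strategy already announced for Theorem \ref{theorem:Main1}, replacing "free group" by "free abelian group" throughout, and to argue that essentially the same machinery applies. By the reduction promised in Section \ref{section:MI}, it suffices to realize a free abelian group of cardinality $2^\lambda$ as the inverse limit of an inverse system of groups satisfying the cardinality constraints that are shown there to be realizable as automorphism groups of fields of characteristic $p$ and cardinality $\lambda$. Since that reduction converts a suitable inverse system of groups into a field whose automorphism group \emph{is} that inverse limit, the task reduces to a purely group-theoretic one: build an inverse system, of the prescribed shape and with the prescribed bounds on the cardinalities of the index set and the constituent groups, whose inverse limit is free abelian of cardinality $2^\lambda$.

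First I would revisit the construction in Section \ref{section:freegroups} that takes an inverse system of sets (supplied in Section \ref{section:system} using $\lambda=\lambda^{\aleph_0}$) and produces an inverse system of groups with free inverse limit. The natural move is to run the analogous construction with the nonabelian free group functor replaced by the free abelian group functor $\ZZZ^{(-)}$, i.e.\ to assign to each set in the inverse system of sets the free abelian group on that set, and to each map of sets the induced homomorphism of free abelian groups. Inverse limits of abelian groups exist and are abelian, so the inverse limit of the resulting system is an abelian group of the correct cardinality $2^\lambda$; what must be verified is that it is in fact \emph{free} abelian. This is where I would expect to reuse, in a simplified form, whichever combinatorial feature of the inverse system of sets guarantees freeness of the inverse limit in the nonabelian case.

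The main obstacle is exactly this freeness verification. Inverse limits do not in general preserve freeness, for either free groups or free abelian groups, so the whole point of choosing the special inverse systems of sets in Section \ref{section:system} is to force the limit to be free; I would need to check that the property of those systems which yields a free nonabelian limit also yields a free \emph{abelian} limit when the free abelian functor is used instead. In favorable cases the abelian argument is strictly easier, since freeness of abelian groups is detected by homological or $\aleph_1$-freeness-type criteria (for instance, Pontryagin's criterion or the fact that a group which is the union of a continuous increasing chain of free abelian groups with free quotients is free), and the combinatorial data already built for the nonabelian case should supply such a chain directly. Alternatively, one can often pass through the abelianization: if the nonabelian inverse limit $G$ is a free group of rank $2^\lambda$ and one can control how abelianization interacts with the inverse limit, then the abelian analogue drops out, though I would be cautious here because abelianization need not commute with inverse limits.

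Concretely, my proposed steps are as follows. (i) State the abelian analogue of the reduction from Section \ref{section:MI}, observing that it is insensitive to whether the target group is abelian. (ii) Replay the Section \ref{section:freegroups} construction with $\ZZZ^{(-)}$ in place of the free group functor, obtaining an inverse system of free abelian groups meeting the required cardinality bounds. (iii) Using the combinatorial properties of the inverse system of sets from Section \ref{section:system}, exhibit the inverse limit as the union of a continuous increasing chain $\seq{A_\alpha}{\alpha<2^\lambda}$ of free abelian groups with each $A_{\alpha+1}/A_\alpha$ free, and invoke the standard freeness criterion to conclude the limit is free abelian of cardinality $2^\lambda$. (iv) Apply the reduction of step (i) to produce the desired field $K$ of characteristic $p$ and cardinality $\lambda$. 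The only genuinely new labor beyond Theorem \ref{theorem:Main1} lies in step (iii), and I expect it to be lighter than its nonabelian counterpart rather than heavier.
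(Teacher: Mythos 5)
Your overall architecture is exactly the paper's: reduce via Section~\ref{section:MI} (Theorem~\ref{theorem:ModelIAut}, which is already stated for arbitrary inverse systems of groups and so needs no ``abelian analogue''), apply the free abelian group functor to the same inverse system of sets $\III_\lambda$ over $\langle[\lambda]^{\aleph_0},\subseteq\rangle$ supplied by $\lambda=\lambda^{\aleph_0}$, and prove that the resulting inverse limit $G_{\III_{ab}}$ is free abelian of cardinality $2^\lambda$. Steps (i), (ii) and (iv) of your plan are precisely Proposition~\ref{proposition:FieldsFromGoodSystems} combined with Lemma~\ref{lemma:GoodInverseSystemFromAssumption}. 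You also correctly identify the crux as the freeness of the limit and correctly distrust the abelianization shortcut, since abelianization need not commute with inverse limits.

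Where you diverge from the paper is in step (iii), and this is also where your proposal stops short of a proof. The paper does \emph{not} build a continuous filtration with free quotients and invoke an Eklof--Pontryagin-type criterion; Theorem~\ref{theorem:AbelianFreeGroupFromSystem} instead exhibits an explicit basis, namely the threads $\vec{g}_{\vec{a}}=(x_{p,a_p})_{p\in D}$ for $\vec{a}\in A_\III$, by rerunning the nonabelian argument with finite supports in place of reduced words: for $\vec{g}=(g_p)_{p\in D}$ one writes $g_p=\sum_{a\in I_{\vec{g},p}}k(\vec{g},p,a)\cdot x_{p,a}$, notes that $\betrag{I_{\vec{g},p}}$ is monotone under the projections, uses the hypothesis that Player II has no winning strategy in $\calG(\DDD)$ to stabilize the support size above some $p_{\vec{g}}$, and then shows the supports and coefficients cohere into elements of $A_\III$, giving generation; independence is the easy direction. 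Your filtration route is not wrong in principle, but to verify that the subgroups generated by initial segments of a well-ordering of $\Set{\vec{g}_{\vec{a}}}{\vec{a}\in A_\III}$ exhaust the limit and have free successive quotients, you would need exactly the generation and independence claims above --- so the criterion buys you nothing and the step is not ``lighter'' than the nonabelian case; it is the same stabilization argument. As written, your step (iii) asserts that ``the combinatorial data should supply such a chain directly'' without producing it, so you should replace it by the direct basis verification sketched here.
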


Again, this drastically contrasts the countable setting as the following result due to S\l awomir Solecki shows.

\begin{theorem}[{\cite[Remark 1.6]{MR1720580}}]
 Let $\calL$ be a countable first-order language and $\calM$ be an $\calL$-model. Then $\Aut{\calM}$ is not an uncountable free abelian group.
\end{theorem}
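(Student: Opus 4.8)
The plan is to exploit the fact that, since the domain of $\calM$ is countable, the group $\Aut{\calM}$ carries a natural Polish group topology: it is a closed subgroup of the symmetric group $\Sym{\omega}$ on the domain of $\calM$, equipped with the topology of pointwise convergence, in which the pointwise stabilizers of finite tuples form a neighbourhood basis of the identity consisting of open subgroups. First I would record the two standard facts that a Polish group is a Baire space and that, by the Pettis theorem, every subgroup of a topological group that has the Baire property and is non-meager is open, and hence of countable index whenever the group is separable. I would then argue by contradiction, assuming that $G:=\Aut{\calM}$ is abstractly isomorphic to an uncountable free abelian group $A=\bigoplus_{i\in I}\ZZZ$ with $I$ uncountable; via this isomorphism I may treat $G$ as the abelian group $A$ while retaining its Polish topology.

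The key algebraic input is the behaviour of $G$ under multiplication by $2$. For each $n$ the map $g\mapsto 2^n g$ is continuous, being built from the continuous group operation, so the subgroup $2^n G$ is a continuous image of the Polish space $G$, hence analytic and in particular possesses the Baire property. On the algebraic side, writing $G$ as $\bigoplus_{i\in I}\ZZZ$ yields $2^n G=\bigoplus_{i\in I}2^n\ZZZ$, so that the quotient $G/2^n G\cong\bigoplus_{i\in I}\ZZZ/2^n\ZZZ$ has cardinality $\betrag{I}$ and is therefore uncountable, while at the same time $\bigcap_{n<\omega}2^n G=\{e\}$.

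Now I would run a Baire category dichotomy on the descending chain $G=2^0G\supseteq 2^1G\supseteq\cdots$. If some $2^n G$ were non-meager, then by the Pettis argument it would be open and hence of countable index, contradicting the uncountability of $G/2^n G$. Hence every $2^n G$ is meager. But since the chain has trivial intersection, every $g\neq e$ lies in some set $2^nG\setminus 2^{n+1}G$, so $G=\{e\}\cup\bigcup_{n<\omega}(2^nG\setminus 2^{n+1}G)$ is a countable union of meager sets and is therefore meager in itself, contradicting the Baire category theorem for the Polish group $G$. This contradiction completes the argument.

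The main obstacle is the transfer between the algebraic and the topological descriptions of $G$: the proof hinges on simultaneously using that $G/2^nG$ is uncountable, an algebraic consequence of being freely abelian of uncountable rank, and that each $2^nG$ has the Baire property, a topological consequence of the continuity of multiplication by $2^n$. Once these two facts are in place, the Pettis theorem and the Baire category theorem supply the two horns of the dichotomy, and the only point requiring genuine care is verifying that an analytic non-meager subgroup of a Polish group is open, for which I would invoke the standard Pettis--Banach--Kuratowski argument.
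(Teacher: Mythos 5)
First, note that the paper offers no proof of this statement: it is quoted verbatim from Solecki, so your argument can only be measured against the known proof of that result. Your setup is fine --- $\Aut{\calM}$ is a closed subgroup of $\Sym{\omega}$, hence Polish with a neighbourhood basis of open subgroups at the identity; each $2^nG$ is analytic, hence has the Baire property; Pettis correctly rules out a non-meager $2^nG$ for $n\geq 1$ because $G/2^nG\cong\bigoplus_{i\in I}\ZZZ/2^n\ZZZ$ is uncountable. The gap is in the final step. In the decomposition $G=\{e\}\cup\bigcup_{n<\omega}\bigl(2^nG\setminus 2^{n+1}G\bigr)$ the term with $n=0$ is $G\setminus 2G$, which is the complement of the \emph{meager} set $2G$ and is therefore comeager, not meager; so $G$ has not been written as a countable union of meager sets and no contradiction with the Baire category theorem arises. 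The failure is not cosmetic: the only two facts your last paragraph uses are that every proper $2^nG$ is meager and that $\bigcap_{n<\omega}2^nG=\{e\}$, and both of these hold in the Baer--Specker group $\ZZZ^\omega$ (where each $2^n\ZZZ^\omega$ with $n\geq 1$ is a closed subgroup of uncountable index, hence nowhere dense, and the intersection is trivial), which is a perfectly good Polish abelian group. Hence no argument using only those two facts can reach a contradiction.

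The standard repair, which is essentially Solecki's argument, replaces the divisibility filtration by the length filtration with respect to a free basis $\Set{x_i}{i\in I}$: writing $\|g\|=\sum_i\betrag{a_i}$ for $g=\sum_i a_ix_i$, set $B_n=\Set{g\in G}{\|g\|\leq n}$, so that $G=\bigcup_{n<\omega}B_n$ and by Baire some $B_n$ is non-meager. Each $B_n$ is analytic? --- in fact one does not need this; one applies Pettis to a non-meager piece of $B_n$ with the Baire property, or more simply to $B_n$ itself after noting it is a countable union of translates of analytic sets, to conclude that $B_n-B_n\subseteq B_{2n}$ contains a neighbourhood of $0$, and hence (using the basis of open subgroups in $\Aut{\calM}$) an open subgroup $H\subseteq B_{2n}$. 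Now the algebra enters through the identity $\|mh\|=\betrag{m}\cdot\|h\|$: since $H$ is a subgroup contained in $B_{2n}$, every $h\in H$ satisfies $(2n+1)\|h\|\leq 2n$, so $H=\{0\}$, the topology is discrete, and $G$ is countable --- contradiction. Your instinct to combine Pettis with an algebraic filtration is exactly right, but the filtration must be one on which the group operation is expansive (the length function) rather than one by subgroups (the chain $2^nG$), precisely because a descending chain of meager subgroups with trivial intersection carries no Baire-category obstruction.
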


In another direction, the methods developed in the proofs of the above results also allow us to show that the cardinal arithmetic assumption $\lambda=\lambda^{\aleph_0}$ 
is consistently not necessary for the existence of a field of cardinality $\lambda$ whose automorphism group is a free group of cardinality greater than $\lambda$. 
This is an implication of the following result. Given a cardinal $\lambda$, we use $\Add{\omega}{\lambda}$ to denote the forcing that adds $\lambda$-many Cohen reals to the ground model.

\begin{theorem}\label{theorem:OuterModelFields}
 Let $\lambda$ be a cardinal with $\lambda=\lambda^{\aleph_0}$ and $p$ be either $0$ or a prime number. 
If $G$ is $\Add{\omega}{\kappa}$-generic over the ground model $\VV$ for some cardinal $\kappa$, then there is a field $K$ of characteristic $p$ and cardinality $\lambda$ 
contained in $\VV[G]$ whose automorphism group is a free group of cardinality greater than or equal to $(2^\lambda)^\VV$ in $\VV[G]$. 
\end{theorem}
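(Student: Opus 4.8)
The plan is to run the construction behind Theorem~\ref{theorem:Main1} inside the ground model $\VV$, where $\lambda=\lambda^{\aleph_0}$ holds, and then argue that the resulting field continues to witness the conclusion after passing to the ccc extension $\VV[G]$, even though $\lambda=\lambda^{\aleph_0}$ will typically fail there. Concretely, I would first work in $\VV$ and apply the machinery of Sections~\ref{section:system}, \ref{section:freegroups} and \ref{section:MI}: the assumption $\lambda=\lambda^{\aleph_0}$ produces a suitable inverse system of sets $\vec{X}$, from which one builds an inverse system of groups $\vec{G}$ together with a field $K$ of characteristic $p$ and cardinality $\lambda$ and a canonical isomorphism $\Aut{K}\cong\varprojlim\vec{G}$, the target being a free group of cardinality $(2^\lambda)^\VV$. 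All of $\vec{X}$, $\vec{G}$ and $K$ then lie in $\VV\subseteq\VV[G]$.

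Next I would pass to $\VV[G]$. Since $\Add{\omega}{\kappa}$ is ccc it preserves cardinals and cofinalities, and it adds no elements to $K$; hence $K$ remains a field of characteristic $p$ and cardinality $\lambda$ in $\VV[G]$. The cardinality lower bound is then cheap: being an automorphism of $K$ is a first-order property of a bijection of $K$, so $\Aut{K}^\VV\subseteq\Aut{K}^{\VV[G]}$, and because $(2^\lambda)^\VV$ stays a cardinal in $\VV[G]$, the group $\Aut{K}^{\VV[G]}$ has cardinality at least $(2^\lambda)^\VV$. Note that the forcing will in general strictly enlarge $\Aut{K}$, since the new Cohen reals produce new threads through $\vec{X}$, hence new elements of $\varprojlim\vec{G}$ and new automorphisms of $K$. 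So the real content is not rigidity of $\Aut{K}$ but the assertion that this larger group is still free.

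The key point I would isolate is that both the identification $\Aut{K}\cong\varprojlim\vec{G}$ and the freeness of $\varprojlim\vec{G}$ are established by $\ZFC$-arguments whose only inputs are structural features of the system $\vec{X}$, and that these features are absolute between $\VV$ and the ccc extension $\VV[G]$. The isomorphism $\Aut{K}\cong\varprojlim\vec{G}$ is given by explicit, \emph{local} maps that convert an automorphism of $K$ into a thread and back; applying the same maps in $\VV[G]$ to the same $K$ and $\vec{G}$ yields $\Aut{K}^{\VV[G]}\cong(\varprojlim\vec{G})^{\VV[G]}$. Likewise, the freeness argument of Section~\ref{section:freegroups} depends only on the combinatorics of $\vec{X}$ (surjectivity and fibre structure of the bonding maps and the like), not on the cardinal arithmetic used to manufacture $\vec{X}$; since $\vec{X}$ itself is untouched by the forcing, these properties persist and the argument applies verbatim in $\VV[G]$ to the enlarged inverse limit. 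Combining these observations gives that $\Aut{K}^{\VV[G]}$ is a free group of cardinality at least $(2^\lambda)^\VV$.

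The hard part will be this last preservation step: one must verify carefully that the freeness criterion extracted in Section~\ref{section:freegroups} is genuinely a property of the system $\vec{X}$ rather than of its $\VV$-computed inverse limit, so that adding new threads cannot destroy it, and that the construction of a free basis is uniform enough to absorb the threads contributed by the Cohen reals. This is exactly where the inverse-limit presentation of $\Aut{K}$ pays off, as it localizes freeness to data that a ccc forcing cannot disturb.
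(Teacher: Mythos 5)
Your overall route is the same as the paper's: build the inverse system $\III_\lambda$ in $\VV$ from $\lambda=\lambda^{\aleph_0}$, observe that the Cohen reals add new threads and hence new automorphisms, and argue that the enlarged inverse limit is still a free group of size at least $(2^\lambda)^\VV$. But you explicitly defer the one step that carries all the content, and the way you describe it suggests you would look in the wrong place. The freeness criterion of Section~\ref{section:freegroups} is not about ``surjectivity and fibre structure of the bonding maps''; it is the assertion that Player II has no winning strategy in the game $\calG(\DDD)$ on the index set $\DDD=\langle([\lambda]^{\aleph_0})^{\VV},\subseteq\rangle$. That statement is a universal quantifier over all strategies, and the extension $\VV[G]$ contains new strategies and new runs of the game, so it is emphatically not absolute in general; asserting that ``these features are absolute between $\VV$ and the ccc extension'' is precisely the gap. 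What actually closes it, and what the paper's Proposition~\ref{proposition:CoverInnerNiceSystem} isolates, is the covering consequence of the countable chain condition: every countable set of ordinals in $\VV[G]$ is contained in a set that lies in $\VV$ and is countable there. Hence every countable subset of $([\lambda]^{\aleph_0})^{\VV}$ formed in $\VV[G]$ is still bounded in $([\lambda]^{\aleph_0})^{\VV}$, so Proposition~\ref{proposition:ClosedDirectedSet} applies in $\VV[G]$ and Player II has no winning strategy there either. You invoke the ccc only to preserve cardinals; its covering consequence is the missing ingredient.

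A second, smaller defect: you build the field $K$ in $\VV$ and then need the isomorphism between $\Aut{K}$ and the inverse limit to survive into $\VV[G]$, i.e.\ you need the constructions of Section~\ref{section:MI} (model to graph to field) to be suitably absolute. That is plausible for the explicit model $\calM_\III$, but the graph and field steps are cited black boxes, so this is an extra verification the paper never has to make: it simply applies Proposition~\ref{proposition:FieldsFromGoodSystems} inside $\VV[G]$ to the system $\III_\lambda^{\VV}$, regarded as a $(\lambda,\nu)$-good system of $\VV[G]$ with $\nu\geq(2^\lambda)^{\VV}$; the theorem only asks for a field contained in $\VV[G]$, not one lying in $\VV$. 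If you adopt that framing, your first two paragraphs become unnecessary and the entire proof reduces to the covering argument above.
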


The above results raise the question whether the existence of a cardinal $\lambda$ of uncountable cofinality with the property that there is no field of cardinality $\lambda$ 
whose automorphism group is a free group of cardinality greater than $\lambda$ is even consistent with the axioms of $\ZFC$. 
Another byproduct of our constructions is the observation that the existence of such a cardinal has consistency strength strictly greater than that of $\ZFC$.
This observation is a consequence of the next result.

Remember that a partial order $\TTT=\langle T,\leq_\TTT\rangle$ is a \emph{tree} if 
$\TTT$ has a unique minimal element and the set $prec_\TTT(t)=\Set{s\in T}{s\leq_\TTT t,~s\neq t}$ is a well-ordered by $\leq_\TTT$ for every $t\in T$. 
Given such a tree $\TTT$ and $t\in T$, we define $\rank{t}{\TTT}$ to be the order-type of $\langle prec_\TTT(t),\leq_\TTT\rangle$. We call the ordinal 
$\height{\TTT}=\lub\Set{\rank{t}{\TTT}}{t\in T}$ the \emph{height} of $\TTT$. Finally, a subset $B$ of $T$ is a \emph{cofinal branch through $\TTT$} 
if $B$ is $\leq_\TTT$-downwards closed and $B$ is well-ordered by $\leq_\TTT$ with order-type $\height{\TTT}$.

\begin{theorem}\label{theorem:InnerModelFields}
 Let $\lambda$ be a cardinal of uncountable cofinality. 
 If there is a tree of cardinality and height $\lambda$ with more than $\lambda$-many cofinal branches, then there is a field of cardinality $\lambda$ 
 whose automorphism group is a free group of cardinality bigger than $\lambda$.
\end{theorem}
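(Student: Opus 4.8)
The plan is to reduce the statement to exactly the machinery assembled for Theorem~\ref{theorem:Main1}. As outlined after that theorem, the cardinal arithmetic hypothesis $\lambda=\lambda^{\aleph_0}$ enters only in Section~\ref{section:system}, where it is used to manufacture an inverse system of sets, each of cardinality at most $\lambda$, indexed by a directed set of uncountable cofinality and carrying many threads; the results of Sections~\ref{section:MI} and~\ref{section:freegroups} then convert such a system into a field of cardinality $\lambda$ whose automorphism group is a free group whose rank is governed by the number of threads through the system. Accordingly, it suffices to extract from the given tree an inverse system of sets that meets the hypotheses of that construction and admits more than $\lambda$-many threads, and to keep the construction rank-faithful so that ``more than $\lambda$'' is inherited by the free group.

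First I would read the required inverse system straight off the tree $\TTT$. For each $\alpha<\lambda$ let $T_\alpha=\Set{t\in T}{\rank{t}{\TTT}=\alpha}$ be the $\alpha$-th level. Since $prec_\TTT(t)$ is well-ordered by $\leq_\TTT$, every $t$ with $\rank{t}{\TTT}=\beta$ has, for each $\alpha<\beta$, a unique $\leq_\TTT$-predecessor of rank $\alpha$; sending $t$ to this predecessor defines maps $f_{\alpha,\beta}\colon T_\beta\longrightarrow T_\alpha$ that compose coherently and hence form an inverse system $\seq{T_\alpha,f_{\alpha,\beta}}{\alpha\leq\beta<\lambda}$ of sets. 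Because $\betrag{T}=\lambda$ we have $\betrag{T_\alpha}\leq\lambda$ for every level. The key point is the canonical identification of the inverse limit with the set of cofinal branches: a thread is a coherent choice of one node at each level, and its downward closure is a $\leq_\TTT$-downward closed chain meeting every level once, hence of order-type $\lambda=\height{\TTT}$, i.e.\ a cofinal branch; conversely each cofinal branch restricts to one node per level and so is a thread. Thus the inverse limit has cardinality equal to the number of cofinal branches, which is bigger than $\lambda$ by assumption.

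It remains to arrange the side conditions of the Section~\ref{section:freegroups} construction. The bonding maps should be surjective and, more strongly, every node should lie on some thread; this I would secure by pruning $\TTT$ to the subtree of nodes that lie on a cofinal branch. Pruning removes no cofinal branch, preserves the rank of every surviving node (all its predecessors survive), keeps each level nonempty of cardinality at most $\lambda$, and leaves the height equal to $\lambda$, so the identification of the limit with the $({>}\lambda)$-many cofinal branches is undisturbed. The one remaining requirement, uncountable cofinality of the index set, comes for free: the directed set $\lambda$ has cofinality $\cof{\lambda}>\omega$ by hypothesis (and if the construction demands a continuous cofinal indexing of length $\cof{\lambda}$, one passes to such a subsystem, noting that a cofinal branch is determined by its restriction to any cofinal set of levels, so the limit is unchanged). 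Feeding the pruned system into Sections~\ref{section:MI} and~\ref{section:freegroups} then yields a field of cardinality $\lambda$ whose automorphism group is free of rank equal to the number of threads, hence of cardinality bigger than $\lambda$.

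The hard part will not be the tree-to-system translation but matching the precise technical demands of the Section~\ref{section:freegroups} construction. In the proof of Theorem~\ref{theorem:Main1} the system of Section~\ref{section:system} is engineered to have exactly $2^\lambda$-many threads, so that the output is a free group of cardinality $2^\lambda$; here I only know that there are more than $\lambda$-many branches, so I must check that the construction is genuinely rank-faithful, producing a free group whose rank is the actual number of threads rather than one of some fixed larger cardinality, and in particular that each cofinal branch contributes an independent free generator. The other delicate point is conceptual rather than computational: I must verify that the uncountable cofinality of $\cof{\lambda}$ really does suffice in place of the stronger arithmetic hypothesis, since it is exactly this cofinality that blocks the obstruction of Theorems~\ref{theorem:ST1} and~\ref{theorem:ST2} and thereby allows the inverse limit to be free.
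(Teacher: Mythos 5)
Your proposal is correct and follows essentially the same route as the paper: form the inverse system $\III_\TTT$ of levels with restriction maps (Example~\ref{example:TreesInverseSystems}), identify its limit with $[\TTT]$, use $\cof{\lambda}>\omega$ together with Proposition~\ref{proposition:ClosedDirectedSet} to verify the game condition, and feed the resulting $(\lambda,\nu)$-good system into Proposition~\ref{proposition:FieldsFromGoodSystems}. The pruning step and your two closing worries are already discharged by the stated results: Theorem~\ref{theorem:FreeGroupFromSystem} needs neither surjective bonding maps nor every node on a branch, and it gives a free group of cardinality exactly $\max\{\aleph_0,\betrag{A_\III}\}$ with the threads as free generators, so rank-faithfulness is automatic.
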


By considering the tree $\langle({}^{{<}\lambda}2)^M,\subseteq\rangle$ for some inner model $M$, this result directly implies the following corollary.

\begin{corollary}
 Let $\lambda$ be a cardinal of uncountable cofinality and $M$ be an inner model $\ZFC$ with $(\lambda^+)^M=\lambda^+$. 
 If $\lambda=(\lambda^{{<}\lambda})^M$, then there is a field of cardinality $\lambda$ 
 whose automorphism group is a free group of cardinality bigger than $\lambda$. \qed
\end{corollary}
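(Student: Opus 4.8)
The plan is to apply Theorem \ref{theorem:InnerModelFields} to the tree $\TTT=\langle({}^{{<}\lambda}2)^M,\subseteq\rangle$ suggested in the remark preceding the corollary, where the ordering is extension of binary sequences. Since $\lambda$ has uncountable cofinality it is in particular a limit ordinal and a cardinal in both $M$ and $\VV$, so this is a genuine tree in the sense defined above: its unique minimal element is the empty sequence, and for a node $t\colon\alpha\to 2$ with $\alpha<\lambda$ the set $prec_\TTT(t)$ consists of the proper initial segments $t\restriction\beta$ with $\beta<\alpha$ and hence is well-ordered by $\subseteq$ with order-type $\alpha$. It then suffices to verify the three hypotheses of the theorem, namely that $\TTT$ has cardinality $\lambda$, that $\height{\TTT}=\lambda$, and that $\TTT$ has more than $\lambda$-many cofinal branches. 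The only delicate point throughout is the comparison of cardinalities computed in $M$ with those computed in $\VV$, and here the assumption $(\lambda^+)^M=\lambda^+$ does all the work, since it guarantees that a set with an $M$-bijection or $M$-injection from an ordinal $\leq\lambda^+$ inherits the corresponding bound in $\VV$.

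For the cardinality I would first compute inside $M$. Since $M$ is a model of $\ZFC$, for every $\alpha<\lambda$ the set $({}^\alpha 2)^M$ is nonempty, so $\TTT$ contains a node of each length below $\lambda$ and therefore has at least $\lambda$-many elements; this lower bound is absolute. On the other hand, working in $M$ every node is a function from some $\alpha<\lambda$ into $2$, so $\betrag{\TTT}\leq(2^{{<}\lambda})^M\leq(\lambda^{{<}\lambda})^M=\lambda$, using the hypothesis $\lambda=(\lambda^{{<}\lambda})^M$. Hence $\betrag{\TTT}^M=\lambda$, a bijection between $\lambda$ and $\TTT$ exists in $M\subseteq\VV$, and thus $\betrag{\TTT}^\VV\leq\lambda$; combined with the absolute lower bound this gives $\betrag{\TTT}^\VV=\lambda$. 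The height is even simpler: the rank of a node of length $\alpha$ equals $\alpha$, so $\height{\TTT}=\lub\Set{\alpha}{\alpha<\lambda}=\lambda$ because $\lambda$ is a limit ordinal, and this identity is absolute as it only refers to ordinals and the length function.

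For the branches I would manufacture cofinal branches directly from functions in $M$. Every $f\in({}^\lambda 2)^M$ has all of its initial segments $f\restriction\alpha$ in $M$ for $\alpha<\lambda$, so the set $B_f=\Set{f\restriction\alpha}{\alpha<\lambda}$ is a $\subseteq$-downwards closed subset of $\TTT$ that is well-ordered by $\subseteq$ with order-type $\lambda=\height{\TTT}$, i.e.\ a cofinal branch through $\TTT$, and distinct $f$ plainly yield distinct $B_f$. Consequently the number of cofinal branches through $\TTT$ in $\VV$ is at least $\betrag{({}^\lambda 2)^M}^\VV$. Computing in $M$ we have $\betrag{({}^\lambda 2)^M}^M=(2^\lambda)^M\geq(\lambda^+)^M=\lambda^+$, so $M$ contains an injection of $\lambda^+=(\lambda^+)^M$ into $({}^\lambda 2)^M$; this injection also lies in $\VV$, whence $\TTT$ has at least $\lambda^+>\lambda$ cofinal branches. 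Theorem \ref{theorem:InnerModelFields} then yields a field of cardinality $\lambda$ whose automorphism group is a free group of cardinality bigger than $\lambda$.

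I do not expect a serious obstacle, since the corollary is engineered as a direct application of Theorem \ref{theorem:InnerModelFields}; the one thing that must be handled with care is the bookkeeping across $M$ and $\VV$, where both the upper bound $\betrag{\TTT}\leq\lambda$ and the lower bound $\lambda^+$ on the number of branches are established first in $M$ and then transferred to $\VV$ precisely because $\lambda^+$ is computed the same way in $M$ and $\VV$.
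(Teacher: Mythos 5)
Your proposal is correct and follows exactly the route the paper intends: the corollary is stated with a \qed precisely because it is the direct application of Theorem \ref{theorem:InnerModelFields} to the tree $\langle({}^{{<}\lambda}2)^M,\subseteq\rangle$, and you have simply filled in the routine verifications (cardinality and height $\lambda$ via $(\lambda^{{<}\lambda})^M=\lambda$, and more than $\lambda$-many branches via $(2^\lambda)^M\geq(\lambda^+)^M=\lambda^+$ transferred to $\VV$). Your bookkeeping between $M$ and $\VV$, including the observation that $M$-injections and $M$-bijections persist to $\VV$, is exactly the point the hypothesis $(\lambda^+)^M=\lambda^+$ is designed to handle.
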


This statement directly allows us to derive large cardinal strength from the non-existence of certain fields.

\begin{corollary}
 Let $\lambda$ be a regular uncountable cardinal such that there is no field of cardinality $\lambda$ whose automorphism group is a free group of cardinality greater than 
$\lambda$. Then $\lambda^+$ is an inaccessible cardinal in $\LL[x]$ for every $x\subseteq\lambda$.
\end{corollary}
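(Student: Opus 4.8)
The proof will run by contraposition, so I assume that for some $x \subseteq \lambda$ the cardinal $\lambda^+$ is \emph{not} inaccessible in $\LL[x]$, and I aim to produce a field of cardinality $\lambda$ whose automorphism group is a free group of cardinality greater than $\lambda$; this contradicts the hypothesis and proves the corollary. The first step is to read off what the failure of inaccessibility gives us. Since $\lambda^+$ is a successor cardinal in $V$ it is regular in $V$, and regularity is inherited downward, so $\lambda^+$ is regular in the inner model $\LL[x]$; moreover $\lambda^+$ is a cardinal in $\LL[x]$, because every cardinal of $V$ remains a cardinal in an inner model, and it is uncountable. As $\LL[x]$ satisfies $\mathrm{GCH}$ at all cardinals $\geq \lambda$ (because $x \subseteq \lambda$), weak and strong inaccessibility coincide in $\LL[x]$ above $\lambda$. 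Hence the only way $\lambda^+$ can fail to be inaccessible in $\LL[x]$ is by failing to be a limit cardinal there, that is, by being a \emph{successor} cardinal of $\LL[x]$: there is a largest $\LL[x]$-cardinal $\mu$ with $\lambda \leq \mu < \lambda^+$ and $(\mu^+)^{\LL[x]} = \lambda^+$.

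The plan now is to manufacture, from $x$, an inner model $M$ to which the preceding corollary applies, namely one satisfying $(\lambda^+)^M = \lambda^+$ and $(\lambda^{{<}\lambda})^M = \lambda$. If $\mu = \lambda$, then $\LL[x]$ already computes $\lambda^+$ correctly and $M = \LL[x]$ works. The substantive case, which I expect to be the main obstacle, is $\lambda < \mu < \lambda^+$, where $\LL[x]$ \emph{underestimates} $\lambda^+$ and so neither earlier result applies to $\LL[x]$ directly; here I must enlarge $\LL[x]$ to repair this computation without spoiling cardinal arithmetic at and below $\lambda$. Since $\mu < \lambda^+ = (\lambda^+)^V$ we have $\betrag{\mu}^V = \lambda$; I fix a bijection $\map{e}{\lambda}{\mu}$ in $V$, let $z \subseteq \lambda$ code the pair $(x,e)$ by means of a fixed constructible pairing function on $\lambda$, and set $M = \LL[z]$, an inner model of $\ZFC$ containing both $x$ and $e$.

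The key verification is that $(\lambda^+)^M = \lambda^+$. One inequality is free: $\lambda^+$ is a $V$-cardinal, hence a cardinal in $M \subseteq V$, so $(\lambda^+)^M \le \lambda^+$. For the reverse I show that no ordinal $\nu$ with $\lambda < \nu < \lambda^+$ is a cardinal of $M$. If $\nu \le \mu$, this is witnessed by $e \in M$, which collapses $\mu$, and hence every such $\nu$, onto $\lambda$. If $\mu < \nu < \lambda^+ = (\mu^+)^{\LL[x]}$, then $\nu$ has $\LL[x]$-cardinality at most $\mu$, so $\LL[x] \subseteq M$ contains a surjection from $\mu$ onto $\nu$, which composed with $e$ yields a surjection from $\lambda$ onto $\nu$ in $M$. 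Thus $(\lambda^+)^M = \lambda^+$.

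Finally, $\lambda$ is regular in $M$, since regularity descends from $V$, and by the standard condensation properties of models of the form $\LL[z]$ with $z \subseteq \lambda$ one obtains $(\lambda^{{<}\lambda})^M = \lambda$; so the preceding corollary, applied to $M$ with $\cof{\lambda} = \lambda > \omega$, produces the desired field and completes the contradiction. The delicate point throughout is that the collapse coded by $e$ must annihilate exactly the $\LL[x]$-cardinals in the interval $(\lambda,\lambda^+)$ while leaving $\lambda^+$ itself a cardinal of $M$, so that $M$ simultaneously meets both hypotheses of the preceding corollary.
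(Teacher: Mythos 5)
Your proof is correct and follows essentially the same route as the paper: from the failure of inaccessibility you deduce that $\lambda^+$ is a successor cardinal of $\LL[x]$, code $x$ together with a collapse of its $\LL[x]$-predecessor into a single set $z\subseteq\lambda$ so that $(\lambda^+)^{\LL[z]}=\lambda^+$ and $(\lambda^{{<}\lambda})^{\LL[z]}=\lambda$, and then invoke the preceding corollary. The paper compresses all of this into one sentence (asserting the existence of such a $y\subseteq\lambda$ and then exhibiting the tree $({}^{{<}\lambda}2)^{\LL[y]}$ directly); you have merely supplied the standard details it omits.
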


\begin{proof}
Assume, towards a contradiction, that $\lambda^+$ is not an inaccessible cardinal in $\LL[x]$ for some $x\subseteq\lambda$. Then there is a $y\subseteq\lambda$ with 
$\lambda^+=(\lambda^+)^{\LL[y]}$ and $\langle({}^{{<}\kappa}2)^{\LL[y]},\subseteq\rangle$ is a tree of cardinality and height $\lambda$, because our assumptions imply 
$(\lambda^{{<}\lambda})^{\LL[y]}=\lambda$. But the set of cofinal branches through this tree has cardinality at least $(2^\lambda)^{\LL[y]}=(\lambda^+)^{\LL[y]}=\lambda^+$, 
a contradiction. 
\end{proof}

Note that Mitchell used an inaccessible cardinal to constructed a model of $\ZFC$ in which every tree of cardinality and height $\omega_1$ has at most $\aleph_1$-many cofinal 
branches (see {\cite[Section 8]{MR823775}} and \cite{MR0313057}). This statement is also a consequence of the \emph{Proper Forcing Axiom} (see {\cite[Theorem 7.10]{MR776640}}).

In the case of singular cardinals of uncountable cofinality, it is possible to use \emph{core model theory} 
(see, for example, \cite{MR2768699}) to obtain inner models containing much larger large cardinals from the above assumption.

\begin{corollary}
  Let $\lambda$ be a singular cardinal of uncountable cofinality such that there is no field of cardinality $\lambda$ whose automorphism group is a free group of cardinality 
greater than $\lambda$. Then there is an inner model with a Woodin cardinal.
\end{corollary}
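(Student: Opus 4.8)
The plan is to argue by contraposition through the core model, mirroring the proof of the preceding corollary but replacing $\LL[x]$ by the core model $\KK$. Assume, towards a contradiction, that there is no inner model with a Woodin cardinal. Under this anti-large-cardinal hypothesis, core model theory (see \cite{MR2768699}) provides the core model $\KK$, an inner model of $\ZFC$ satisfying the \emph{Generalized Continuum Hypothesis}, together with the \emph{weak covering property}: since $\lambda$ is a singular cardinal of uncountable cofinality, $\KK$ computes its successor correctly, so that $(\lambda^+)^\KK=\lambda^+$.

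First I would verify that the tree $\TTT=\langle({}^{{<}\lambda}2)^\KK,\subseteq\rangle$ of those bounded binary sequences lying in $\KK$, ordered by end-extension, has cardinality and height $\lambda$. Its height is $\lambda$ because it contains sequences of every length $\alpha<\lambda$ and $\rank{s}{\TTT}=\alpha$ for each such $s$ of length $\alpha$. For the cardinality, note that $\lambda$ is a limit cardinal, so $\betrag{\alpha}^+<\lambda$ for every $\alpha<\lambda$; since $\KK$ satisfies the Generalized Continuum Hypothesis, the level consisting of sequences of length $\alpha$ has size $(2^{\betrag{\alpha}})^\KK=(\betrag{\alpha}^+)^\KK<\lambda$, and summing these $\lambda$-many levels yields $\betrag{\TTT}=(2^{{<}\lambda})^\KK=\lambda$. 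This is the one point where the singular and regular cases genuinely diverge: for singular $\lambda$ the quantity $(\lambda^{{<}\lambda})^\KK$ equals $\lambda^+$ and is useless, but $(2^{{<}\lambda})^\KK$ still equals $\lambda$.

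Next I would count the cofinal branches. Every $f\in({}^\lambda2)^\KK$ determines a cofinal branch $\Set{f\restriction\alpha}{\alpha<\lambda}$ through $\TTT$, and distinct functions determine distinct branches. Hence the number of cofinal branches is at least $(2^\lambda)^\KK$, which by the Generalized Continuum Hypothesis in $\KK$ equals $(\lambda^+)^\KK$, and this equals $\lambda^+$ by weak covering. Thus $\TTT$ is a tree of cardinality and height $\lambda$ with more than $\lambda$-many cofinal branches, and Theorem \ref{theorem:InnerModelFields} then produces a field of cardinality $\lambda$ whose automorphism group is a free group of cardinality greater than $\lambda$, contradicting our hypothesis on $\lambda$. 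Therefore some inner model contains a Woodin cardinal.

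I expect the main obstacle to be purely that of invoking the correct black box from inner model theory: the existence of $\KK$ and, crucially, the weak covering equality $(\lambda^+)^\KK=\lambda^+$ at the singular cardinal $\lambda$ under the assumption that no inner model has a Woodin cardinal. This is the deep ingredient, supplied by the theory surveyed in \cite{MR2768699}; once it is in hand, the tree construction and the application of Theorem \ref{theorem:InnerModelFields} run entirely parallel to the regular case treated in the previous corollary, the only adjustment being the use of $(2^{{<}\lambda})^\KK$ in place of $(\lambda^{{<}\lambda})^\KK$ when bounding $\betrag{\TTT}$.
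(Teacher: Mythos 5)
Your proof is correct and follows essentially the same route as the paper: assume no inner model with a Woodin cardinal, build the core model $\KK$ below one Woodin cardinal, use GCH in $\KK$ plus weak covering at the singular $\lambda$ to get $(\lambda^+)^\KK=\lambda^+$, and then apply Theorem \ref{theorem:InnerModelFields} to the tree $\langle({}^{{<}\lambda}2)^\KK,\subseteq\rangle$. The only difference is that you spell out the cardinality computation $(2^{{<}\lambda})^\KK=\lambda$ via the limit-cardinal argument, which the paper leaves implicit; this is a correct and welcome elaboration, not a deviation.
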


\begin{proof}
Assume, towards a contradiction, that there is no inner model with a Woodin cardinal. Then we can construct the \emph{core model $\KK$ below one Woodin cardinal}. 
It satisfies the \emph{Generalized Continuum Hypothesis} and has the \emph{covering property}. In particular, we have 
$\lambda^+=(\lambda^+)^{\KK}$. But this means that $\langle({}^{{<}\lambda}2)^{\KK},\subseteq\rangle$ is a tree of cardinality and height $\lambda$ and the set of cofinal 
branches through this tree has cardinality at least $(2^\lambda)^{\KK}=(\lambda^+)^{\KK}=\lambda^+$, a contradiction.
\end{proof}

The results of {\cite[Section 2]{MR1812172}} show that the non-existence of such trees at a singular cardinal of uncountable cofinality is equivalent to a 
\rm{PCF}-theoretic statement that is not known to be consistent. Related questions can also be found in {\cite[Chapter II, Section 6]{MR1318912}}.


\section{Representing inverse limits as automorphism groups}\label{section:MI}

In this section, we start from an inverse system of groups $\III$ to construct a first-order language $\calL$ and an $\calL$-model $\calM$ such that $\Aut{M}$ 
is an inverse limit of $\III$ and the cardinalities of $\calL$ and $\calM$ only depend on the cardinalities of the groups in $\III$ and the cardinality of the 
underlying directed set. 
We start by recalling some standard definitions and presenting the relevant examples.

We call a pair $\DDD=\langle D,\leq_\DDD\rangle$ a \emph{directed set} if $\leq_\DDD$ is a reflexive, transitive binary relation on the set $D$ 
 with the property that for all $p,q\in D$ there is a $r\in D$ with $p,q\leq_\DDD r$.


Given a directed set $\DDD=\langle D,\leq_\DDD\rangle$, we call a pair 
\begin{equation*}
 \III=\langle\seq{A_p}{p\in D},\seq{f_{p,q}}{p,q\in D,~p\leq_\DDD q}\rangle
\end{equation*}
an \emph{inverse system of sets over $\DDD$} if the following statements hold for all $p,q,r \in D$ with $p\leq_\DDD q\leq_\DDD r$.
\begin{enumerate}
 \item $A_p$ is a non-empty set and $\map{f_{p,q}}{A_q}{A_p}$ is a function.

 \item $f_{p,p}=\id_{A_p}$ and $f_{p,q}\circ f_{q,r}=f_{p,r}$.
\end{enumerate}
Given such an inverse system $\III$, we call the set 
\begin{equation*}
  A_\III = \BigSet{(a_p)_{p\in D}}{\textit{$f_{p,q}(a_q)=a_p$ for all $p,q\in D$ with $p\leq_\DDD q$}}
\end{equation*}
the \emph{inverse limit of $\III$}.

\begin{example}\label{example:LambdaCountableInverseLimit}
 Let $\lambda$ be an infinite cardinal and let $[\lambda]^{\aleph_0}$ denote the set of all countable subsets of $\lambda$. 
 Given $u,v\in[\lambda]^{\aleph_0}$ with $u\subseteq v$, 
 set $A_u={}^u 2$ and define $\map{f_{u,v}}{A_v}{A_u}$ by $f_{u,v}(s)=s\restriction u$ for all $s\in{}^v 2$. 
 Let 
 \begin{equation*}
  \III_\lambda ~ = ~ \langle\seq{A_u}{u\in[\lambda]^{\aleph_0}},\seq{f_{u,v}}{u,v\in[\lambda]^{\aleph_0},~u\subseteq v}\rangle 
 \end{equation*}
 denote the resulting inverse system of sets over the directed set $\langle [\lambda]^{\aleph_0},\subseteq\rangle$. 

 Then it is easy to see that
 \begin{equation*}
  \Map{b}{{}^\lambda 2}{A_{\III_\lambda}}{x}{(x\restriction u)_{u\in[\lambda]^{\aleph_0}}}
 \end{equation*}
 is a well-defined bijection between the sets ${}^\lambda 2$ and $A_{\III_\lambda}$.
\end{example}

\begin{example}\label{example:TreesInverseSystems}
 Let $\TTT=\langle T,\leq_\TTT\rangle$ be a tree. Given $\alpha<\height{\TTT}$, we let $\TTT(\alpha)$ denote the set of all $t\in T$ with $\rank{t}{\TTT}=\alpha$. 
 If $t\in T$ and $\alpha\leq\rank{t}{\TTT}$, then we let $t\restriction\alpha$ denote the unique element $s\in \{t\}\cup prec_\TTT(t)$ with $\rank{s}{\TTT}=\alpha$.

  Given $\alpha\leq\beta<\height{\TTT}$, set $A_\alpha=\TTT(\alpha)$ and 
 \begin{equation*}
  \Map{f_{\alpha,\beta}}{A_\beta}{A_\alpha}{t}{t\restriction\alpha}.
 \end{equation*} 
 We let 
 \begin{equation*}
  \III_\TTT ~ = ~ \langle\seq{A_\alpha}{\alpha<\height{\TTT}},\seq{f_{\alpha,\beta}}{\alpha\leq\beta<\height{\TTT}}\rangle
 \end{equation*}
 denote the resulting inverse system of sets over the directed set $\langle\height{\TTT},\leq\rangle$.

 It is easy to see that the induced map
 \begin{equation*}
  \Map{b}{A_{\III_\TTT}}{[\TTT]}{(a_\alpha)_{\alpha<\lambda}}{\Set{a_\alpha}{\alpha<\lambda}}
 \end{equation*}
 is a bijection between the inverse limit $A_{\III_\TTT}$ and the set $[\TTT]$ consisting of all cofinal branches through $\TTT$.
\end{example}


We now consider inverse limits in the category of groups. 
Given a directed set $\DDD=\langle D,\leq_\DDD\rangle$, a pair
\begin{equation*}
 \III=\langle\seq{G_p}{p\in D},\seq{h_{p,q}}{p,q\in D,~p\leq_\DDD q}\rangle 
\end{equation*}
is an \emph{inverse system of groups over $\DDD$} if the following statements hold.
\begin{enumerate}
 \item $G_p$ is a group with underlying set $X_p$ for all $p\in D$.

 \item If $p,q\in D$ with $p\leq_\DDD q$, then $\map{h_{p,q}}{G_q}{G_p}$ is a homomorphism of groups.

 \item The pair
   \begin{equation*}
    \langle\seq{X_p}{p\in D},\seq{h_{p,q}}{p,q\in D,~p\leq_\DDD q}\rangle
   \end{equation*}
  is an inverse system of sets.
\end{enumerate}

Given such a system $\III$, we call the subgroup
\begin{equation*}
 G_{\III}=\BigSet{(g_p)_{p\in D}\in\prod_{p\in D}G_p}{\textit{$h_{p,q}(g_q)=g_p$ for all $p,q\in D$ with $p\leq_\DDD q$}}
\end{equation*}
of the direct product of the $G_p$'s the \emph{inverse limit of $\III$}.

\begin{example}\label{example:LimitOfFreeGroups}
 Let $\DDD=\langle D,\leq_\DDD\rangle$ be a directed set and 
 \begin{equation*}
  \III=\langle\seq{A_p}{p\in D},\seq{f_{p,q}}{p,q\in D,~p\leq_\DDD q}\rangle  
 \end{equation*}
 be an inverse system of sets over $\DDD$. For each $p\in D$, let $G_p$ be the free group with generators $\Set{x_{p,a}}{a\in A_p}$. 
 Given $p,q\in D$ with $p\leq_\DDD q$, we let $\map{h_{p,q}}{G_q}{G_p}$ denote the unique homomorphism of groups with 
 $h_{p,q}(x_{q,a})=x_{p,f_{p,q}(a)}$ for all $a\in A_q$. Let 
 \begin{equation*}
  \III_{gr} ~ = ~ \langle\seq{G_p}{p\in D},\seq{h_{p,q}}{p,q\in D,~p\leq_\DDD q}\rangle
 \end{equation*}
 denote the resulting inverse system of groups over $\DDD$.
\end{example}

It is an obvious question whether the corresponding inverse limit $G_{\III_{gr}}$ is itself a free group. In Section \ref{section:freegroups} 
we will present conditions that imply this statement. These implications will allow us to prove Theorem \ref{theorem:Main1}.

\begin{example}
 Pick $\DDD$ and $\III$ as in Example \ref{example:LimitOfFreeGroups}. 
 For each $p\in D$, let $H_p$ be the free abelian group with basis $\Set{x_{p,a}}{a\in A_p}$. 
 Define $\map{h_{p,q}}{H_q}{H_p}$ for all $p,q\in D$ with $p\leq_\DDD q$ as above and let 
 \begin{equation*}
  \III_{ab} ~ = ~ \langle\seq{H_p}{p\in D},\seq{h_{p,q}}{p,q\in D,~p\leq_\DDD q}\rangle
 \end{equation*}
 denote the resulting inverse system of groups over $\DDD$. Since every $H_p$ is an abelian group, the group $G_{\III_{ab}}$ is also abelian.
\end{example}


This section focuses on the proof of the following result.

\begin{theorem}\label{theorem:ModelIAut}
 Let $\III=\langle\seq{G_q}{q\in D},\seq{h_{q,r}}{q,r\in D,~q\leq_\DDD r}\rangle$ 
be an inverse system of groups over a directed set $\DDD=\langle D,\leq_\DDD\rangle$ and $p$ be either $0$ or a prime number. Then there is a field $K$ of characteristic $p$ with the following properties.
\begin{enumerate}
 \item The groups $\Aut{K}$ and $G_{\III}$ are isomorphic.
 \item $\betrag{K} \leq \max\{\aleph_0,\sum_{q\in D}\betrag{G_q}\}$.
\end{enumerate}
\end{theorem}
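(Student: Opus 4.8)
The plan is to reduce to the model-theoretic fact cited in the introduction (from \cite{MR660867} and \cite{Sh913}): for every first-order language $\calL$ and $\calL$-model $\calN$ with $\betrag{\calL},\betrag{\calN}\leq\lambda$ for some infinite $\lambda$, and every $p$ that is $0$ or prime, there is a field $K$ of characteristic $p$ and cardinality $\lambda$ with $\Aut{K}\cong\Aut{\calN}$. Hence it suffices to build a language $\calL$ and an $\calL$-model $\calM$ with $\Aut{\calM}\cong G_{\III}$ and $\betrag{\calL},\betrag{\calM}\leq\lambda:=\max\{\aleph_0,\sum_{q\in D}\betrag{G_q}\}$; applying the cited result with this $\lambda$ then yields the desired field and establishes both conclusions.

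To construct $\calM$, I would first assume without loss of generality that the underlying sets $X_q$ of the groups $G_q$ are pairwise disjoint, and take the domain of $\calM$ to be $M=\bigcup_{q\in D}X_q$. The idea is to encode each group by its right-multiplication maps, whose centralizer in the symmetric group of $X_q$ is exactly the group of left multiplications (a copy of $G_q$), and then to encode the bonding maps so that only \emph{coherent} families of left multiplications survive as automorphisms. Concretely, the language $\calL$ has, for each $q\in D$, a unary predicate symbol $P_q$; for each $q\in D$ and $a\in X_q$, a unary function symbol $\mathsf{r}_{q,a}$; and for each pair $q\leq_\DDD r$, a unary function symbol $\mathsf{h}_{q,r}$. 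I interpret $P_q$ as $X_q$; I interpret $\mathsf{r}_{q,a}$ as the total map sending $x\in X_q$ to the product $x\cdot a$ in $G_q$ and fixing every $x\notin X_q$; and I interpret $\mathsf{h}_{q,r}$ as the total map sending $x\in X_r$ to $h_{q,r}(x)\in X_q$ and fixing every $x\notin X_r$.

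Next I would verify $\Aut{\calM}\cong G_{\III}$. Any $\sigma\in\Aut{\calM}$ preserves each $P_q$, so $\sigma[X_q]=X_q$; since $\sigma$ commutes with every $\mathsf{r}_{q,a}^{\calM}$, the identities $\sigma(x\cdot a)=\sigma(x)\cdot a$ evaluated at $x=e_{G_q}$ give $\sigma(a)=g_q\cdot a$ for all $a\in X_q$, where $g_q:=\sigma(e_{G_q})$, so that $\sigma\restriction X_q=\lambda_{g_q}$ is left multiplication by $g_q$. Thus $\sigma$ is coded by the family $(g_q)_{q\in D}$. Commuting with $\mathsf{h}_{q,r}^{\calM}$ then yields $g_q\cdot h_{q,r}(x)=h_{q,r}(g_r\cdot x)=h_{q,r}(g_r)\cdot h_{q,r}(x)$ for all $x\in X_r$, where the last equality is exactly the homomorphism property of $h_{q,r}$; cancelling (for instance at $x=e_{G_r}$) gives $g_q=h_{q,r}(g_r)$, i.e.\ $(g_q)_{q\in D}\in G_{\III}$. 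Conversely, every coherent family $(g_q)_{q\in D}\in G_{\III}$ acts as $\lambda_{g_q}$ on each $X_q$ and so defines an automorphism of $\calM$, and the assignment $(g_q)_{q\in D}\mapsto\sigma$ is readily checked to be a group isomorphism $G_{\III}\to\Aut{\calM}$, since composition corresponds to coordinatewise multiplication.

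Finally, for the cardinality bookkeeping, $\betrag{\calM}=\sum_{q\in D}\betrag{G_q}\leq\lambda$, while $\calL$ has $\betrag{D}$ predicate symbols, $\sum_{q\in D}\betrag{G_q}$ right-multiplication symbols, and at most $\betrag{D}^2$ bonding symbols, so $\betrag{\calL}\leq\lambda$ as well, using that $\sum_{q\in D}\betrag{G_q}\geq\betrag{D}$ because each $G_q$ is nonempty. I expect the only step meriting genuine care to be the automorphism computation, specifically the cancellation that converts ``$\sigma$ commutes with the $\mathsf{h}_{q,r}$'' into the coherence condition defining $G_{\III}$: this is precisely where the homomorphism property of the bonding maps is used, and it is what forces $\Aut{\calM}$ to be the inverse limit rather than the full product $\prod_{q\in D}G_q$. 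The predicates $P_q$ are included only to prevent automorphisms from permuting the sorts (which could otherwise occur when distinct $G_q$ happen to be isomorphic), and once the reduction and this computation are in place the theorem follows at once from the cited model-to-field result.
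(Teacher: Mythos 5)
Your proposal is correct and follows essentially the same route as the paper: both constructions name all right multiplications in the language (you via unary function symbols, the paper via constants together with a ternary relation), so that automorphisms are forced to act by left multiplication on each sort, and both then use the named bonding maps to cut the resulting product of left-regular representations down to the inverse limit before invoking the model-to-field transfer of Fried--Koll\'ar and Kaplan--Shelah. The differences are purely in the packaging of the signature and the cardinality bookkeeping, which you carry out correctly.
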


In the remainder of this section, we fix $\DDD$ and $\III$ as in the statement of the theorem. 
We define $\lambda$ to be the cardinal $\max\{\aleph_0,\sum_{q\in D}\betrag{G_q}\}$.

The following results show that it suffices to find a first-order language $\calL_\III$ of cardinality $\lambda$ and an $\calL_\III$-model $\calM_\III$ of cardinality $\lambda$ 
such that the groups $\Aut{\calM_\III}$ and $G_{\III}$ are isomorphic.

\begin{proposition}
 Let $\lambda$ be an infinite cardinal and $\calL$ be a first order language of cardinality at most $\lambda$. 
 If $\calM$ is an $\calL$-model of cardinality at most $\lambda$, then there is a connected graph $\Gamma=\langle X,E\rangle$ such that $\betrag{X}=\lambda$ 
and the groups $\Aut{\calM}$ and $\Aut{\Gamma}$ are isomorphic.
\end{proposition}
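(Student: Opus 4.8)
The plan is to encode the $\calL$-model $\calM$ as a connected graph so that structure-preserving maps correspond exactly. This is a classical reduction (due to the fact that graphs are a universal class for automorphism groups), and the main work is packaging it carefully enough that the cardinality bound $\betrag{X}=\lambda$ is preserved. First I would reduce to a single symmetric binary relation: replace $\calM$ by a relational structure (turning each $n$-ary function into its $(n{+}1)$-ary graph relation and each constant into a unary predicate), which does not change $\Aut{\calM}$ and keeps the language and universe of size at most $\lambda$. So I may assume $\calM = \langle M, \seq{R_i}{i\in I}\rangle$ with $\betrag{I}\le\lambda$ and each $R_i$ of some finite arity $n_i$.

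Next I would build the graph $\Gamma=\langle X,E\rangle$ in stages. The vertex set contains a copy of $M$ together with auxiliary ``gadget'' vertices that rigidly encode each relation and each arity, so that no automorphism of $\Gamma$ can move an element of $M$ off $M$, and the restriction to $M$ must respect every $R_i$. Concretely, for each tuple $\bar a=(a_0,\dots,a_{n_i-1})\in R_i$ I would attach a small finite configuration of new vertices: a ``central'' vertex for the tuple, connected to the $a_j$ by paths of pairwise distinct lengths that record the index $j$, and further attached to an index-gadget that marks the relation $i$ by a length or degree invariant distinguishing it from all other relations. The standard trick is to use asymmetric finite gadgets (for instance, paths or trees of distinct lengths, or a rigid ``tagging'' subgraph whose automorphism-invariants are all different) so that an automorphism is forced to fix each gadget pointwise, hence to permute $M$ in a way that preserves each $R_i$ exactly; conversely any automorphism of $\calM$ extends uniquely to $\Gamma$ by acting trivially on the gadgets. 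This yields a group isomorphism $\Aut{\calM}\cong\Aut{\Gamma}$ via restriction. Connectedness can be arranged by adding, if necessary, a single rigid ``spine'' vertex (or a rigid attached tree) joined to $M$ and to all the gadgets, chosen asymmetric enough that it too is fixed by every automorphism; one checks this extra vertex does not create unwanted symmetries.

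For the cardinality, each tuple and each relation contributes only finitely many new vertices, and the number of tuples is at most $\sum_{i\in I}\betrag{M}^{n_i}\le\lambda\cdot\lambda^{{<}\omega}=\lambda$ since $\lambda$ is infinite; adding $\betrag{M}\le\lambda$ original vertices and the finitely-or-$\lambda$-many spine vertices keeps $\betrag{X}\le\lambda$. One also needs $\betrag{X}\ge\lambda$: this is immediate if $\betrag{M}=\lambda$, and in the degenerate case $\betrag{M}<\lambda$ one simply pads $X$ with $\lambda$-many copies of a fixed rigid finite gadget all attached to the spine, which adds nothing to $\Aut{\Gamma}$ while forcing $\betrag{X}=\lambda$.

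The main obstacle I anticipate is designing the gadgets so that they are simultaneously \emph{rigid} (admitting no nontrivial automorphism and no symmetry permuting different relation-indices or different coordinate-positions) and \emph{finite of bounded local complexity}, so that the cardinality bound survives; the delicate point is ensuring that automorphisms of $\Gamma$ cannot mix a gadget vertex with an element of $M$ or exploit accidental isomorphisms between the distance-profiles used to tag different coordinates, which is exactly where choosing all the encoding lengths pairwise distinct (and distinct from any distance that can occur among the $M$-vertices) does the work. Verifying this ``no accidental automorphism'' property, and the clean bijection between $\Aut{\calM}$ and $\Aut{\Gamma}$ it yields, is the heart of the argument; the counting and the connectedness adjustment are then routine.
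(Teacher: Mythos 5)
The paper does not actually prove this proposition; it only cites {\cite[Section 5.5]{MR1221741}} and {\cite[Section 3]{Sh913}}, so your attempt is being measured against the standard construction carried out in those references. Your overall strategy (reduce to a relational structure, attach a rigid gadget to each tuple of each relation, add a spine for connectedness, and show that restriction to $M$ is a group isomorphism) is exactly that standard route. However, two concrete steps fail when $\lambda$ is uncountable, and the proposition is stated for arbitrary infinite $\lambda$.

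First, you propose to tag the (up to) $\lambda$-many relation symbols by \emph{finite} gadgets distinguished by ``length or degree invariants''. There are only countably many isomorphism types of finite graphs, and only countably many finite distance profiles, so for $\betrag{I}>\aleph_0$ two distinct relation symbols $R_i$ and $R_j$ of the same arity must receive tags that no graph-theoretic invariant separates. If some tuple $\bar a$ lies in $R_i\cap R_j$, the map swapping the $(i,\bar a)$-gadget with the $(j,\bar a)$-gadget and fixing everything else is a nontrivial automorphism of $\Gamma$ whose restriction to $M$ is $\id_M$, so the restriction homomorphism $\Aut{\Gamma}\to\Aut{\calM}$ is not injective. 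The standard repair is to index the relation symbols by the vertices of a single \emph{rigid connected graph of cardinality $\lambda$} (for instance one coding the well-order $\langle\lambda,{<}\rangle$) and to join each tuple-gadget to the vertex representing its symbol; rigidity of the index graph then forces every automorphism of $\Gamma$ to preserve the symbol. Second, your cardinality padding --- ``$\lambda$-many copies of a fixed rigid finite gadget all attached to the spine'' --- fails for the same kind of reason: identical copies attached at the same vertex can be permuted arbitrarily, so this adjoins a copy of $\Sym{\lambda}$ to $\Aut{\Gamma}$ rather than nothing. Again a single rigid connected graph of cardinality $\lambda$ hanging off the spine does the job. With these two (standard) repairs your argument goes through; as written it only establishes the proposition for countable languages.
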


\begin{proof}
 The above statement can be derived from the results of {\cite[Section 5.5]{MR1221741}} and {\cite[Section 3]{Sh913}}.
\end{proof}

\begin{theorem}[\cite{MR660867} and {\cite[Main Theorem B]{Sh913}}]
 Let $\Gamma=\langle X,E\rangle$ be a connected graph and $p$ be either $0$ or a prime number. Then there is a field $K$ of characteristic $p$ with the following properties.
\begin{enumerate}
 \item The groups $\Aut{\Gamma}$ and $\Aut{K}$ are isomorphic.
 \item $\betrag{K}\leq\max\{\aleph_0,\betrag{X}\}$.
\end{enumerate}
\end{theorem}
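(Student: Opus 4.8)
The plan is to encode the graph $\Gamma$ directly into the arithmetic of a field over the prime field $\mathbb{F}$ of characteristic $p$ (so $\mathbb{F}=\mathbb{Q}$ when $p=0$ and $\mathbb{F}=\mathbb{F}_p$ otherwise), so that both the vertex set and the edge relation become first-order definable in the language of fields and so that the resulting field carries no symmetries beyond those forced by $\Gamma$. First I would fix a family $\{t_x:x\in X\}$ that is algebraically independent over $\mathbb{F}$ and put $F_0=\mathbb{F}(t_x:x\in X)$, and then pass to a carefully chosen algebraic extension $K\supseteq F_0$. This extension must accomplish two tasks. The first is \emph{rigidification}: above each generator $t_x$ I would adjoin an infinite, canonically shaped tower of radical extensions --- Kummer extensions $\sqrt[n]{\cdot}$ when $p\nmid n$, together with Artin--Schreier extensions (roots of $T^p-T-a$) to absorb the characteristic-$p$ obstruction --- whose isomorphism type pins $t_x$ down; the point is to make the set $V=\{t_x:x\in X\}$ definable in $K$ and to forbid any automorphism from carrying a vertex element to a non-vertex element. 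The second is \emph{edge encoding}: for each edge $\{x,y\}\in E$ I would adjoin an element algebraic over $\mathbb{F}(t_x,t_y)$ and symmetric in the pair, chosen so that the relation ``there is an edge between $a$ and $b$'' becomes a first-order definable binary relation on $V$ reproducing $E$.

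With $K$ constructed, the isomorphism $\Aut{\Gamma}\cong\Aut{K}$ splits into two homomorphisms. For the first, I would verify that the construction is functorial in $\Gamma$: a graph automorphism $\sigma$ acts on the generators by $t_x\mapsto t_{\sigma(x)}$, hence extends to an automorphism of $F_0$, and since $\sigma$ preserves $E$ it lifts compatibly through both the rigidifying towers and the edge elements to a unique automorphism of $K$. This produces an injective homomorphism $\Aut{\Gamma}\to\Aut{K}$. For the second, I would use definability directly: as $V$ and the edge relation on $V$ are definable in the field language, every $\tau\in\Aut{K}$ maps $V$ onto itself and induces a graph automorphism $\bar\tau$ of $\Gamma$, and $\tau\mapsto\bar\tau$ is a homomorphism $\Aut{K}\to\Aut{\Gamma}$.

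The hard part will be to show that these two homomorphisms are mutually inverse, which amounts to proving that an automorphism of $K$ is completely determined by its action on $V$ and that no accidental automorphisms survive. Concretely, one must show that an automorphism fixing every $t_x$ is the identity on all of the adjoined radicals and edge elements. This is genuinely delicate because a naive radical tower has unwanted symmetries: the Galois group of a Kummer or Artin--Schreier step permutes the roots of its defining polynomial, so different choices of roots could a priori be interchanged. The standard device is to break every such symmetry by hand --- adjoining, alongside each radical, further elements that single out one particular root and tie consecutive levels of the tower rigidly together --- and then to argue level by level up the tower, using the algebraic independence of the $t_x$ over $\mathbb{F}$, that the only surviving freedom is the permutation of the $t_x$ governed by $\Aut{\Gamma}$. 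Connectedness of $\Gamma$, guaranteed by the preceding reduction, may be used here to make the definability and generation of $K$ uniform across the vertices.

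Finally, the cardinality bound (ii) is a matter of economical bookkeeping, and this economy is exactly what must be watched: rigidifying by passing to algebraic closures or adjoining all conjugates would suffice for property (i) but could inflate the cardinality, so the towers and edge elements must be adjoined sparingly. Since $K$ is generated over the countable prime field $\mathbb{F}$ by the family $\{t_x:x\in X\}$ together with at most countably many elements per vertex and per edge, and since $\betrag{E}\leq\betrag{X}^2\leq\max\{\aleph_0,\betrag{X}\}$, the field $K$ has cardinality at most $\max\{\aleph_0,\betrag{X}\}$, which is precisely property (ii).
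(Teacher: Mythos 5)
Note first that the paper contains no proof of this statement: it is imported verbatim from Fried--Koll\'ar \cite{MR660867} and from {\cite[Main Theorem B]{Sh913}}, so the only meaningful comparison is with those constructions. Your outline does reproduce their general shape --- a transcendence basis $\{t_x : x\in X\}$ indexed by the vertices, rigidifying radical towers above each $t_x$ (Kummer plus Artin--Schreier in characteristic $p$), algebraic elements coding edges, and a definability argument giving the homomorphism $\Aut{K}\to\Aut{\Gamma}$ --- and your cardinality count for (ii) is correct as far as it goes.

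However, as a proof this has a genuine gap, and you half-acknowledge it yourself: the entire content of the theorem sits in the step you defer to a ``standard device.'' Two concrete failure points. First, rigidity of the towers: once $n$-th roots of $t_x$ are adjoined, any $n$-th root of unity present in $K$ (and adjoining radicals can force such roots into $K$) produces an automorphism that fixes $F_0$ pointwise and moves the tower; saying one adjoins ``further elements that single out one particular root'' is a statement of what must be arranged, not an argument, and the level-by-level verification that the combined Kummer/Artin--Schreier tower admits \emph{no} automorphism over $F_0$ is precisely where \cite{MR660867} and \cite{Sh913} expend their effort. Your claim that a graph automorphism lifts to a \emph{unique} automorphism of $K$ presupposes exactly this unproven rigidity, so the ``mutually inverse'' step is circular as written. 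Second, definability of $V=\{t_x : x\in X\}$: if vertexhood is detected by divisibility properties (possessing $n$-th roots for all $n$ in some set), then products such as $t_xt_y$, inverses, and power products inherit the same divisibility, so a naive ``radical-rich'' definition fails to separate the generators from composite elements; the cited proofs avoid this by intricate arithmetic choices (distinct primes assigned to distinct coding roles), and your proposal never exhibits a defining formula, so the map $\Aut{K}\to\Aut{\Gamma}$ is not actually constructed. Relatedly, connectedness of $\Gamma$ is not mere ``uniformity'' --- in the cited arguments it is what anchors the definability across all of $X$ --- and your proposal does not use it in any identifiable way. In short: the roadmap is the right one, but every point where the theorem is hard is flagged rather than resolved, so what you have is a plan for the Fried--Koll\'ar/Kaplan--Shelah proof, not a proof.
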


We are now ready to construct $\calL_\III$ and $\calM_\III$ with the properties stated above.

Define $\calL_\III$ to be first-order language with the following symbols.
\begin{itemize}
 \item Constant symbols $\dot{c}_{g,q}$ for all $q\in D$ and $g\in G_q$.

 \item Unary relation symbols $\dot{P}_q$ for all $q\in D$.

 \item Binary relation symbols $\dot{H}_{q,r}$ for all $q,r\in D$ with $q\leq_\DDD r$.

 \item Ternary relations symbols $\dot{F}_q$ for all $q\in D$.
\end{itemize}

Let $\calM_\III$ denote the unique $\calL_\III$-model with the following properties.
\begin{itemize}
 \item The domain of $\calM_\III$ is the set
  \begin{equation*}
   M_\III=\Set{\langle g,q,i\rangle}{q\in D,~g\in G_q,~i<2}.
  \end{equation*}

 \item $\dot{c}_{g,q}^{\calM_\III}=\langle g,q,1\rangle$ for all $q\in D$ and $g\in G_q$.

 \item $\dot{P}_q^{\calM_\III}=\Set{\langle g,q,0\rangle}{g\in G_q}$ for all $q\in D$.

 \item $\dot{H}_{q,r}^{\calM_\III}=\Set{\langle\langle g,r,0\rangle,\langle h_{q,r}(g),q,0\rangle\rangle}{g\in G_r}$ for all $q,r\in D$ with $q\leq_\DDD r$.

 \item $\dot{F}_q^{\calM_\III}=\Set{\langle\langle g,q,0\rangle,\langle h,q,1\rangle,\langle g\cdot h,q,0\rangle\rangle}{g,h\in G_q}$ for all $q\in D$.
\end{itemize}

\begin{proposition}\label{proposition:SigmaRestrictionQ}
 If $\sigma\in\Aut{\calM_\III}$, $q\in D$ and $g\in G_q$, then $\sigma(\langle g,q,1\rangle)=\langle g,q,1\rangle$ and $\map{\sigma\restriction\dot{P}_q^{\calM_\III}}{\dot{P}_q^{\calM_\III}}{\dot{P}_q^{\calM_\III}}$. \qed
\end{proposition}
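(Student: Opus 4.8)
The plan is to derive both assertions directly from the two defining features of any automorphism of an $\calL_\III$-model: it fixes the interpretations of all constant symbols, and it preserves the interpretations of all relation symbols. No substantive combinatorics is needed here; the content is entirely in how the signature of $\calM_\III$ was set up, so the proof will amount to unwinding the relevant definitions.

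First I would record that, by construction, every element of $M_\III$ of the form $\langle g,q,1\rangle$ is exactly the interpretation $\dot{c}_{g,q}^{\calM_\III}$ of the constant symbol $\dot{c}_{g,q}$. Since any $\sigma\in\Aut{\calM_\III}$ satisfies $\sigma(\dot{c}^{\calM_\III})=\dot{c}^{\calM_\III}$ for every constant symbol $\dot{c}$ of $\calL_\III$, applying this to $\dot{c}_{g,q}$ yields $\sigma(\langle g,q,1\rangle)=\sigma(\dot{c}_{g,q}^{\calM_\III})=\dot{c}_{g,q}^{\calM_\III}=\langle g,q,1\rangle$ for all $q\in D$ and $g\in G_q$. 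This establishes the first claim.

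For the second claim I would invoke that $\sigma$ preserves the unary relation $\dot{P}_q$ in both directions: for every $x\in M_\III$ one has $x\in\dot{P}_q^{\calM_\III}$ if and only if $\sigma(x)\in\dot{P}_q^{\calM_\III}$. Because $\sigma$ is a bijection of $M_\III$, this equivalence forces $\sigma$ to restrict to a bijection of $\dot{P}_q^{\calM_\III}$ onto itself; in particular $\map{\sigma\restriction\dot{P}_q^{\calM_\III}}{\dot{P}_q^{\calM_\III}}{\dot{P}_q^{\calM_\III}}$ is well-defined, as asserted. There is no real obstacle to overcome: both statements follow from the generic behaviour of automorphisms on constants and relations, and the only thing worth emphasising is that the third coordinate $i<2$ in $\langle g,q,i\rangle$ was introduced precisely so that the constants (the layer $i=1$) and the sets $\dot{P}_q^{\calM_\III}$ (the layer $i=0$) are distinguished by the signature, which is what compels every $\sigma$ to respect this partition and thereby makes the proposition immediate.
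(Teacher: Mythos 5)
Your proof is correct and is precisely the argument the paper intends: the proposition is stated with an immediate \qed because it follows directly from the facts that automorphisms fix the interpretations of the constant symbols $\dot{c}_{g,q}$ (which enumerate the layer $i=1$) and preserve the unary relations $\dot{P}_q$ (which carve out the layer $i=0$). Nothing further is needed.
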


\begin{proposition}\label{proposition:proposition:SigmaRestrictionP}
 If $\sigma\in\Aut{\calM_\III}$ and $q\in D$, then there is a unique $c_{\sigma,q}\in G_q$ with $\sigma(\langle g,q,0\rangle)=\langle c_{\sigma,q}\cdot g,q,0\rangle$ 
for all $g\in G_q$.
\end{proposition}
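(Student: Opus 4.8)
The plan is to use Proposition \ref{proposition:SigmaRestrictionQ} to reduce the statement to an analysis of how $\sigma$ acts on the set $\dot{P}_q^{\calM_\III}$, and then to exploit the fact that the ternary relation $\dot{F}_q$ codes the group operation of $G_q$. Fix $\sigma\in\Aut{\calM_\III}$ and $q\in D$. By Proposition \ref{proposition:SigmaRestrictionQ}, the automorphism $\sigma$ fixes every element of the form $\langle g,q,1\rangle$ and restricts to a bijection of $\dot{P}_q^{\calM_\III}$ onto itself. Since every element of $\dot{P}_q^{\calM_\III}$ is equal to $\langle g,q,0\rangle$ for a unique $g\in G_q$, this bijection induces a permutation $\phi$ of the underlying set of $G_q$ determined by the requirement that $\sigma(\langle g,q,0\rangle)=\langle\phi(g),q,0\rangle$ holds for all $g\in G_q$.

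First I would extract a functional equation for $\phi$ from the preservation of $\dot{F}_q$. By the definition of $\calM_\III$, the relation $\dot{F}_q^{\calM_\III}$ holds of the triple $\langle\langle g,q,0\rangle,\langle h,q,1\rangle,\langle g\cdot h,q,0\rangle\rangle$ for all $g,h\in G_q$. Applying the automorphism $\sigma$, using that $\sigma$ preserves $\dot{F}_q$ and that it fixes the middle coordinate $\langle h,q,1\rangle$, I obtain that $\dot{F}_q^{\calM_\III}$ holds of $\langle\langle\phi(g),q,0\rangle,\langle h,q,1\rangle,\langle\phi(g\cdot h),q,0\rangle\rangle$. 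Comparing this membership with the defining clause for $\dot{F}_q^{\calM_\III}$ forces $\phi(g\cdot h)=\phi(g)\cdot h$ for all $g,h\in G_q$; that is, $\phi$ intertwines left multiplication with right translation in exactly the way the relation is designed to detect.

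Finally I would specialize this identity. Setting $g$ equal to the identity element of $G_q$ gives $\phi(h)=\phi(\eins)\cdot h$ for all $h\in G_q$, so putting $c_{\sigma,q}=\phi(\eins)$ yields the displayed formula $\sigma(\langle g,q,0\rangle)=\langle c_{\sigma,q}\cdot g,q,0\rangle$. Uniqueness is immediate: any $c\in G_q$ with $\sigma(\langle g,q,0\rangle)=\langle c\cdot g,q,0\rangle$ for all $g$ must satisfy this in particular for $g$ equal to the identity, which forces $c=c_{\sigma,q}$. I do not expect a genuine obstacle here; the computation is direct, and the only point requiring care is that $\sigma$ really does fix each element $\langle h,q,1\rangle$ occurring in the middle coordinate of $\dot{F}_q$. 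This is precisely the content of Proposition \ref{proposition:SigmaRestrictionQ}, and it is exactly what makes $\sigma$ act on $\dot{P}_q^{\calM_\III}$ as left translation by a single fixed element of $G_q$ rather than as some more general permutation.
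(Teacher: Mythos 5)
Your proposal is correct and follows essentially the same route as the paper: both use Proposition \ref{proposition:SigmaRestrictionQ} to see that $\sigma$ fixes the constants $\langle h,q,1\rangle$ and preserves $\dot{P}_q^{\calM_\III}$, and then apply preservation of $\dot{F}_q$ to pin down the action as left translation. The only cosmetic difference is that you derive the general identity $\phi(g\cdot h)=\phi(g)\cdot h$ and then specialize to the identity element, whereas the paper specializes to $\eins_{G_q}$ from the outset.
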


\begin{proof}
 By Proposition \ref{proposition:SigmaRestrictionQ}, there is a unique $c_{\sigma,q}\in G_q$ such that $\sigma(\langle \eins_{G_q},q,0\rangle)=\langle c_{\sigma,q},q,0\rangle$. 
Given $g\in G_q$, we have $\dot{F}_q^{\calM_\III}(\langle c_{\sigma,q},q,0\rangle,\langle g,q,1\rangle,\sigma(\langle g,q,0\rangle)\rangle)$ 
and this implies $\sigma(\langle g,q,0\rangle)=\langle c_{\sigma,q}\cdot g,q,0\rangle$.
\end{proof}

\begin{proposition}
 If $\sigma\in\Aut{\calM_\III}$ and $q,r\in D$ with $q\leq_\DDD r$, then $h_{q,r}(c_{\sigma,r})=c_{\sigma,q}$. 
In particular, the sequence $(c_{\sigma,q})_{q\in D}$ is an element of $G_\III$ for every $\sigma\in\Aut{\calM_\III}$.
\end{proposition}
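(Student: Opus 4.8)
The plan is to exploit the interpretation of the binary relation symbol $\dot{H}_{q,r}$, which was introduced precisely to encode the transition homomorphism $h_{q,r}$ inside $\calM_\III$. Since every $\sigma\in\Aut{\calM_\III}$ preserves this relation, applying $\sigma$ to a carefully chosen pair lying in $\dot{H}_{q,r}^{\calM_\III}$ will force the desired compatibility between $c_{\sigma,r}$ and $c_{\sigma,q}$.

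Fix $\sigma\in\Aut{\calM_\III}$ and $q,r\in D$ with $q\leq_\DDD r$. First I would note that, because $h_{q,r}$ is a homomorphism of groups, we have $h_{q,r}(\eins_{G_r})=\eins_{G_q}$, and hence the pair $\langle\langle\eins_{G_r},r,0\rangle,\langle\eins_{G_q},q,0\rangle\rangle$ is an element of $\dot{H}_{q,r}^{\calM_\III}$ by the definition of this relation. Next I would apply $\sigma$ to both coordinates of this pair. Proposition \ref{proposition:proposition:SigmaRestrictionP} computes these images directly as $\sigma(\langle\eins_{G_r},r,0\rangle)=\langle c_{\sigma,r},r,0\rangle$ and $\sigma(\langle\eins_{G_q},q,0\rangle)=\langle c_{\sigma,q},q,0\rangle$. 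Since $\sigma$ is an automorphism, it preserves $\dot{H}_{q,r}$, so the pair $\langle\langle c_{\sigma,r},r,0\rangle,\langle c_{\sigma,q},q,0\rangle\rangle$ again lies in $\dot{H}_{q,r}^{\calM_\III}$.

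Finally I would read off the conclusion from the definition of $\dot{H}_{q,r}^{\calM_\III}$: every pair in this relation has the form $\langle\langle g,r,0\rangle,\langle h_{q,r}(g),q,0\rangle\rangle$ for a unique $g\in G_r$, so matching the first coordinate gives $g=c_{\sigma,r}$ and then matching the second coordinate forces $c_{\sigma,q}=h_{q,r}(c_{\sigma,r})$. The ``in particular'' clause is immediate, since the equations $h_{q,r}(c_{\sigma,r})=c_{\sigma,q}$ for all $q\leq_\DDD r$ are exactly the compatibility conditions defining membership in the inverse limit $G_\III$. The argument is essentially bookkeeping; the only point requiring any care is tracking the third coordinate $i=0$, so that the images supplied by Proposition \ref{proposition:proposition:SigmaRestrictionP} really do stay inside the part of $M_\III$ on which $\dot{H}_{q,r}^{\calM_\III}$ is concentrated, and I expect no genuine obstacle beyond this.
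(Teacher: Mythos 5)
Your argument is correct and is essentially the paper's own proof: both start from the fact that $\langle\langle\eins_{G_r},r,0\rangle,\langle\eins_{G_q},q,0\rangle\rangle\in\dot{H}_{q,r}^{\calM_\III}$, apply $\sigma$ using the description of $\sigma$ on the $\dot{P}^{\calM_\III}$-parts, and read off $h_{q,r}(c_{\sigma,r})=c_{\sigma,q}$ from the definition of the relation. No differences worth noting.
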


\begin{proof}
 The definition of $\calM_\III$ yields $\dot{H}_{q,r}^{\calM_\III}(\langle\eins_{G_r},r,0\rangle,\langle\eins_{G_q},q,0\rangle)$. We can conclude 
$\dot{H}_{q,r}^{\calM_\III}(\langle c_{\sigma,r},r,0\rangle,\langle c_{\sigma,q},q,0\rangle)$ and $h_{q,r}(c_{\sigma,r})=c_{\sigma,q}$.
\end{proof}

\begin{lemma}\label{lemma:IsomorphismModelIInverseLimit}
The map
\begin{equation*}
 \Map{\Phi}{\Aut{\calM_\III}}{G_\III}{\sigma}{(c_{\sigma_q})_{p\in D}}
\end{equation*}
is an isomorphism of groups.
\end{lemma}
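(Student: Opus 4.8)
The plan is to verify that $\Phi$ is a well-defined group homomorphism with a two-sided inverse. The propositions leading up to this lemma already establish the hard structural facts: by the preceding proposition, the sequence $(c_{\sigma,q})_{q\in D}$ satisfies $h_{q,r}(c_{\sigma,r})=c_{\sigma,q}$ for all $q\leq_\DDD r$, so it genuinely lies in $G_\III$ and $\Phi$ is well-defined. What remains is essentially bookkeeping, so I expect no serious obstacle; the only point requiring care is the homomorphism computation, where one must track how composition of automorphisms interacts with the left-multiplication description of Proposition \ref{proposition:proposition:SigmaRestrictionP}.

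First I would check that $\Phi$ is a homomorphism. Fix $\sigma,\tau\in\Aut{\calM_\III}$ and $q\in D$. Using $(\sigma\circ\tau)(\langle\eins_{G_q},q,0\rangle)=\sigma(\langle c_{\tau,q},q,0\rangle)$ together with Proposition \ref{proposition:proposition:SigmaRestrictionP} applied to $\sigma$, this equals $\langle c_{\sigma,q}\cdot c_{\tau,q},q,0\rangle$, whence $c_{\sigma\circ\tau,q}=c_{\sigma,q}\cdot c_{\tau,q}$ by uniqueness of the constant. Since this holds coordinatewise for every $q\in D$, we get $\Phi(\sigma\circ\tau)=\Phi(\sigma)\cdot\Phi(\tau)$ in the direct product, and hence in $G_\III$.

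Next I would produce an inverse map, which simultaneously gives injectivity and surjectivity. Given $(g_q)_{q\in D}\in G_\III$, I would define a candidate automorphism $\sigma$ of $\calM_\III$ by setting $\sigma(\langle g,q,1\rangle)=\langle g,q,1\rangle$ and $\sigma(\langle g,q,0\rangle)=\langle g_q\cdot g,q,0\rangle$ for all $q\in D$ and $g\in G_q$. One checks that $\sigma$ is a bijection of $M_\III$ (its inverse uses $g_q^{-1}$), that it preserves each $\dot{P}_q^{\calM_\III}$ and fixes each constant $\dot{c}_{g,q}^{\calM_\III}$, that it respects $\dot{F}_q^{\calM_\III}$ because $(g_q\cdot g)\cdot h=g_q\cdot(g\cdot h)$ by associativity, and that it respects $\dot{H}_{q,r}^{\calM_\III}$ precisely because the compatibility condition $h_{q,r}(g_r)=g_q$ holds in $G_\III$: indeed $\langle\langle g,r,0\rangle,\langle h_{q,r}(g),q,0\rangle\rangle\in\dot{H}_{q,r}^{\calM_\III}$ maps to $\langle\langle g_r\cdot g,r,0\rangle,\langle g_q\cdot h_{q,r}(g),q,0\rangle\rangle$, and $g_q\cdot h_{q,r}(g)=h_{q,r}(g_r)\cdot h_{q,r}(g)=h_{q,r}(g_r\cdot g)$ since $h_{q,r}$ is a homomorphism. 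Thus $\sigma\in\Aut{\calM_\III}$ with $\Phi(\sigma)=(g_q)_{q\in D}$, giving surjectivity. Finally, $\Phi$ is injective because Proposition \ref{proposition:proposition:SigmaRestrictionP} shows that any $\sigma\in\Aut{\calM_\III}$ is completely determined on the $0$-elements by the constants $c_{\sigma,q}$ and fixes all $1$-elements, so $\Phi(\sigma)=\Phi(\sigma')$ forces $\sigma=\sigma'$. Combining these three facts establishes that $\Phi$ is an isomorphism of groups.
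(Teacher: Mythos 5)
Your proof is correct and follows essentially the same route as the paper's: the same homomorphism computation at $\langle\eins_{G_q},q,0\rangle$, the same explicit left-translation automorphism $\sigma_{\vec{g}}$ for surjectivity, and injectivity from the two preceding propositions. The only difference is that you spell out the verification that $\sigma_{\vec{g}}$ preserves the relations $\dot{F}_q^{\calM_\III}$ and $\dot{H}_{q,r}^{\calM_\III}$, which the paper leaves to the reader.
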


\begin{proof}
 Given $\sigma_0,\sigma_1\in\Aut{\calM_\III}$ and $q\in D$, we have
 \begin{equation*}
    (\sigma_1\circ\sigma_0)(\langle \eins_{G_q},q,0\rangle) = \sigma_1 (\langle c_{\sigma_0,q},q,0\rangle) = \langle c_{\sigma_1,q}\cdot c_{\sigma_0,q},q,0\rangle
 \end{equation*}
 and therefore $c_{\sigma_1\circ\sigma_0,q}=c_{\sigma_1,q}\cdot c_{\sigma_0,q}$. This shows that $\Phi$ is a homomorphism.

 Given $\vec{g}=(g_q)_{q\in D}\in G_\III$, we define $\map{\sigma_{\vec{g}}}{M_\III}{M_\III}$ by the following clauses.
 \begin{enumerate}
  \item $\sigma_{\vec{g}}(\langle g,q,0\rangle)=\langle g_q\cdot g,q,0\rangle$ for all $q\in D$ and $g\in G_q$.

  \item $\sigma_{\vec{g}}(\langle g,q,1\rangle)=\langle g,q,1\rangle$ for all $q\in D$ and $g\in G_q$.
 \end{enumerate}
 Then $\sigma_{\vec{g}}\in\Aut{\calM_\III}$ and $c_{\sigma_{\vec{g}},q}=g_q$ for all $q\in D$. This shows that $\Phi$ is surjective. 
Since Propositions \ref{proposition:SigmaRestrictionQ} and \ref{proposition:proposition:SigmaRestrictionP}  
imply that $\Phi$ is also injective, this concludes the proof of the lemma.
\end{proof}

\begin{proof}[Proof of Theorem \ref{theorem:ModelIAut}]
By our assumptions, both $\calL_\III$ and $\calM_\III$ have cardinality at most $\lambda$.  
By the results mentioned above, there is a field $K$ of characteristic $p$ and cardinality $\lambda$ such that the groups 
$\Aut{\calM_\III}$ and $\Aut{K}$ are isomorphic. By Lemma \ref{lemma:IsomorphismModelIInverseLimit}, this completes the proof of the theorem.
\end{proof}


\section{Representing free groups as inverse limits}\label{section:freegroups}

This sections shows how free groups can be represented as inverse limits of systems of groups assuming the existence of certain \emph{suitable} inverse systems of sets. 
In the following, we prove statements from assumptions much weaker than the ones present in Theorem \ref{theorem:Main1} to motivate possible strengthenings of this result.

Let $\DDD=\langle D,\leq_\DDD\rangle$ be a directed set. We define an infinite game $\calG(\DDD)$ of perfect information between Player I and Player II: 
in the $i$-th round of this game Player I chooses an element $p_{2i}$ from $D$ and then Player II chooses an element $p_{2i+1}$ from $D$. 
Player I wins a run $(p_i)_{i<\omega}$ of $\calG(\DDD)$ if and only if either there is an $i<\omega$ with $p_{2i}\not\leq_\DDD p_{2i+1}$ or $p_{2i+1}\leq_\DDD p_{2i+2}$ 
holds for all $i<\omega$ and there is a $p\in D$ with $p_i\leq_\DDD p$ for all $i<\omega$.\footnote{A similar game can be used to characterize the $\sigma$-distributivity of Boolean algebras. See {\cite{MR739910}}.} 
A \emph{winning strategy} for Player II is a function $\map{s}{{}^{{<}\omega}D}{D}$ with the property that Player II wins every run $(p_i)_{i<\omega}$ that is played 
according to $s$, in the sense that $s(\langle p_0,\dots,p_{2i}\rangle)=p_{2i+1}$ holds for all $i<\omega$.

\begin{theorem}\label{theorem:FreeGroupFromSystem}
 Let $\DDD=\langle D,\leq_\DDD\rangle$ be a directed set with the property that Player II has no winning strategy in $\calG(\DDD)$. 
 If $\III$ is an inverse system of sets over $\DDD$ with $A_\III\neq\emptyset$, 
 then the inverse limit $G_{\III_{gr}}$ is a free group of cardinality $\max\{\aleph_0,\betrag{A_\III}\}$.
\end{theorem}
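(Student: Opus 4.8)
The plan is to show that $G_{\III_{gr}}$ is freely generated by the family $\Set{X_a}{a\in A_\III}$, where for a thread $a=(a_p)_{p\in D}\in A_\III$ we put $X_a=(x_{p,a_p})_{p\in D}$. First I would check that $X_a$ really lies in $G_{\III_{gr}}$: since $h_{p,q}(x_{q,a_q})=x_{p,f_{p,q}(a_q)}=x_{p,a_p}$ for $p\leq_\DDD q$, the coherence condition defining the inverse limit holds. The assignment $a\mapsto X_a$ is injective, because two distinct threads differ in some coordinate $p$ and there produce distinct generators of $G_p$. Hence, once freeness on this set is established, the rank is $\betrag{A_\III}\geq 1$ and the cardinality is $\max\{\aleph_0,\betrag{A_\III}\}$, as claimed.

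The ``no relations'' direction is the easy half. Given a nontrivial reduced word $X_{a_1}^{\epsilon_1}\cdots X_{a_n}^{\epsilon_n}$ in these elements, reducedness means that $a_i\neq a_{i+1}$ whenever $\epsilon_i=-\epsilon_{i+1}$. Using directedness of $\DDD$ finitely many times, I would find a single $p\in D$ with $(a_i)_p\neq(a_{i+1})_p$ for every index $i$ with $a_i\neq a_{i+1}$; at this coordinate the word $x_{p,(a_1)_p}^{\epsilon_1}\cdots x_{p,(a_n)_p}^{\epsilon_n}$ is reduced and nonempty, hence nontrivial in the free group $G_p$, so the whole element is nontrivial in the product. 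Thus the family generates its subgroup freely.

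The real content is that this family generates \emph{all} of $G_{\III_{gr}}$, and this is exactly where the hypothesis on $\calG(\DDD)$ enters; I expect it to be the main obstacle. Fix a thread $g=(g_p)\in G_{\III_{gr}}$ and write $\ell(w)$ for the reduced length of a word $w$. Since each $h_{p,q}$ carries generators to generators, it cannot increase reduced length, so $\ell(g_p)\leq\ell(g_q)$ whenever $p\leq_\DDD q$; that is, $p\mapsto\ell(g_p)$ is monotone. The key step is that this function is bounded, which I would prove contrapositively: if it were unbounded, Player II could win $\calG(\DDD)$ by answering Player I's move $p_{2i}$ with an upper bound $p_{2i+1}$ of $p_{2i}$ and of some $q$ with $\ell(g_q)\geq i+1$ (such $q$ exists by unboundedness, and an upper bound exists by directedness). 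Any run following this strategy satisfies $p_{2i}\leq_\DDD p_{2i+1}$, so Player I cannot win via the first disjunct, while monotonicity gives $\ell(g_{p_{2i+1}})\geq i+1\to\infty$; a common upper bound $p$ of the run would satisfy $\ell(g_p)\geq\ell(g_{p_{2i+1}})$ for all $i$, which is impossible for a finite quantity, so Player I cannot win via the second disjunct either. Fixing canonical choices (via a well-ordering of $D$) turns this into a genuine winning strategy for Player II, contradicting the hypothesis; hence $\ell(g_\bullet)$ is bounded.

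Boundedness of a monotone integer-valued function on a directed set forces stabilization: there is $p_0$ with $\ell(g_q)=L:=\ell(g_{p_0})$ for all $q\geq_\DDD p_0$, and for such $q$ the map $h_{p_0,q}$ acts on $g_q$ with no cancellation. Writing $g_q=x_{q,a^q_1}^{\epsilon_1}\cdots x_{q,a^q_L}^{\epsilon_L}$, the absence of cancellation makes the signs $\epsilon_j$ independent of $q$ and, comparing $g_q=h_{q,q'}(g_{q'})$ letter by letter, yields the coherence $f_{q,q'}(a^{q'}_j)=a^q_j$ for $p_0\leq_\DDD q\leq_\DDD q'$. Thus each column $(a^q_j)_{q\geq_\DDD p_0}$ is a thread of the subsystem over the cone $\Set{q}{q\geq_\DDD p_0}$; as this cone is directed and cofinal in $\DDD$, restriction is a bijection between $A_\III$ and the inverse limit over the cone, so each column extends uniquely to some $a_j\in A_\III$. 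Then $X_{a_1}^{\epsilon_1}\cdots X_{a_L}^{\epsilon_L}$ agrees with $g$ on the cofinal cone and therefore everywhere, exhibiting $g$ in the subgroup generated by the $X_a$. Together with the previous paragraph this shows $G_{\III_{gr}}$ is free on $\Set{X_a}{a\in A_\III}$, of cardinality $\max\{\aleph_0,\betrag{A_\III}\}$.
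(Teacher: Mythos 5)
Your proposal is correct and follows essentially the same route as the paper: the same candidate basis of threads $\Set{X_a}{a\in A_\III}$, freeness by separating letters at a single coordinate via directedness, and generation via monotonicity of reduced length, stabilization obtained from the hypothesis on $\calG(\DDD)$, and the no-cancellation coherence of letters. The only differences are presentational (you build a winning strategy for Player II directly rather than exhibiting a non-winning strategy and using a run won by Player I, and you extend threads from a cofinal cone rather than via chosen upper bounds $\bar{p}$), and these amount to the same computations.
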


\begin{proof}
Let $\III=\langle\seq{A_p}{p\in D},\seq{f_{p,q}}{p,q\in D,~p\leq_\DDD q}\rangle$. 
If $\vec{g}=(g_p)_{p\in D}\in G_{\III_{gr}}$ and $p\in D$, then we let
\begin{itemize}
 \item $n(\vec{g},p)<\omega$,

 \item $k(\vec{g},p,1), ~ \dots ~ ,k(\vec{g},p,n(\vec{g},p))\in\ZZZ\setminus\{0\}$,

 \item $a(\vec{g},p,1), ~ \dots ~ ,a(\vec{g},p,n(\vec{g},p))\in A_p$
\end{itemize}
 denote the uniquely determined objects with the property that the word
\begin{equation*}
 w_{\vec{g},p} ~ = ~ x_{p,a(\vec{g},p,1)}^{k(\vec{g},p,1)} ~ \dots ~ x_{p,a(\vec{g},p,n(\vec{g},p))}^{k(\vec{g},p,n(\vec{g},p))}
\end{equation*}
is the unique reduced word representing $g_p$, i.e. $w_{\vec{g},p}$ represents $g_p$ and
\begin{equation*}
 a(\vec{g},p,i)\neq a(\vec{g},p,i+1) 
\end{equation*}
for all $1\leq i<n(\vec{g},p)$ 
(see {\cite[2.1.2]{MR1357169}}).

\begin{claim}\label{claim:One}
 If $\vec{g}=(g_p)_{p\in D}\in G_{\III_{gr}}$ and $p,q\in D$ with $p\leq_\DDD q$, then $n(\vec{g},p)\leq n(\vec{g},q)$.
\end{claim}

\begin{proof}[Proof of the Claim]
 Since $h_{p,q}(g_q)=g_p$, we know that the word
 \begin{equation}\label{equation:AlterWord}
  w ~ = ~ x_{p,f_{p,q}(a(\vec{g},q,1))}^{k(\vec{g},q,1)} ~ \dots ~ x_{p,f_{p,q}(a(\vec{g},q,n(\vec{g},q)))}^{k(\vec{g},q,n(\vec{g},q))}
 \end{equation}
also represents $g_p$. Hence $w_{\vec{g},p}$ can be obtained from $w$ by a finite number of reductions. This implies $n(\vec{g},p)\leq n(\vec{g},q)$.
\end{proof}

\begin{claim}
 If $\vec{g}\in G_{\III_{gr}}$, then there are $p_{\vec{g}}\in D$ and $n_{\vec{g}}<\omega$ such that $n_{\vec{g}}=n(\vec{g},p)$ for all $p\in D$ with $p_{\vec{g}}\leq_\DDD p$.
\end{claim}

\begin{proof}[Proof of the Claim]
Let $\vec{g}=(g_p)_{p\in D}$ and assume, toward a contradiction, that for every $p\in D$ there is a $q\in D$ with $p\leq_\DDD q$ and $n(\vec{g},p)<n(\vec{g},q)$. Then there is a function $\map{s}{{}^{{<}\omega}D}{D}$ such that $p_{2i}\leq_\DDD s(\langle p_0,\dots,p_{2i}\rangle)$ and $n(\vec{g},p_{2i})<n(\vec{g},s(\langle p_0,\dots,p_{2i}\rangle))$ for all $i<\omega$ and $p_0,\dots,p_{2i}\in D$. By our assumption, $s$ is not a winning strategy for Player II and there is a run $(p_i)_{i<\omega}$ of $\calG(\DDD)$ played according to $s$ that is won by Player I. This gives us a $p\in D$ with $p_i\leq_\DDD p$ for all $i<\omega$. By Claim \ref{claim:One}, we have $n(\vec{g},p)>i$ for all $i<\omega$, a contradiction.
\end{proof}

\begin{claim}\label{claim:Three}
 If $\vec{g}=(g_p)_{p\in D}\in G_{\III_{gr}}$ and $p,q\in D$ with $p_{\vec{g}}\leq_\DDD p\leq_\DDD q$, then $a(\vec{g},p,i)=f_{p,q}(a(\vec{g},q,i))$ and 
$k(\vec{g},p,i)=k(\vec{g},q,i)$ for all $1\leq i<n_{\vec{g}}$.
\end{claim}

\begin{proof}[Proof of the Claim]
 Let $w$ be the word defined in (\ref{equation:AlterWord}). Then $w$ is reduced, because otherwise there would be a reduced word 
$x^{k_0}_{p,a_1}\dots x^{k_{l-1}}_{p,a_l}$ with $l<n(\vec{g},q)=n(\vec{g},p)$ representing $g_p$ 
and this would contradict the choice of $w_{\vec{g},p}$. We can conclude $w=w_{\vec{g},p}$ and, again by the uniqueness of $w_{\vec{g},p}$, this yields the statements of the claim.
\end{proof}

Given $\vec{a}=(a_p)_{p\in D}\in A_\III$, we define 
\begin{equation*}
 \vec{g}_{\vec{a}} ~ = ~ (x_{p,a_p})_{p\in D} ~ \in ~ \prod_{p\in D}G_p.  
\end{equation*}
It is easy to see that $\vec{g}_{\vec{a}}$ is an element of $G_{\III_{gr}}$.

\begin{claim}
 The group $G_{\III_{gr}}$ is generated by the set $\Set{\vec{g}_{\vec{a}}}{\vec{a}\in A_\III}$.
\end{claim}

\begin{proof}[Proof of the Claim]
Let $\vec{g}=(g_p)_{p\in D}\in G_{\III_{gr}}$. Set $n=n_{\vec{g}}$ and $k_i=k(\vec{g},p_{\vec{g}},i)$ for all $1\leq i\leq n$. 
For each $p\in D$, we fix an element $\bar{p}$ of $D$ with $p,p_{\vec{g}}\leq_\DDD \bar{p}$. Given $p\in D$ and $1\leq i\leq n$, 
define $a_{\vec{g},p,i}=f_{p,\bar{p}}(a(\vec{g},\bar{p},i)) \in A_p$.

Let $p,q\in D$ with $p\leq_\DDD q$. Fix an $r\in D$ with $\bar{p},\bar{q}\leq_\DDD r$. 
By Claim \ref{claim:Three}, we have $a(\vec{g},\bar{p},i)=f_{\bar{p},r}(a(\vec{g},r,i))$ and $a(\vec{g},\bar{q},i)=f_{\bar{q},r}(a(\vec{g},r,i))$. This implies
\begin{equation*}
 f_{p,q}(a_{\vec{g},q,i}) ~ = ~ f_{p,q}(f_{q,\bar{q}}(a(\vec{g},\bar{q},i))) ~ = ~ f_{p,r}(a(\vec{g},r,i)) ~ = ~ f_{p,\bar{p}}(a(\vec{g},\bar{p},i)) ~ = ~ a_{\vec{g},p,i}
\end{equation*}
and we can conclude 
\begin{equation*}
 \vec{a}_{\vec{g},i} ~ = ~ (a_{\vec{g},p,i})_{p\in D} ~ \in ~ A_\III.
\end{equation*}
By the above computations, we know that 
\begin{equation*}
 g_p ~ = ~ h_{p,\bar{p}}(g_{\bar{p}}) ~ = ~ h_{p,\bar{p}}\big(x^{k_1}_{\bar{p},a(\vec{g},\bar{p},1)}\cdot ~ \dots ~ \cdot x^{k_n}_{\bar{p},a(\vec{g},\bar{p},n)}\big) ~ = ~ x^{k_1}_{p,a_{\vec{g},p,1}} \cdot ~ \dots ~ \cdot x^{k_n}_{p,a_{\vec{g},p,n}}
\end{equation*}
holds for all $p\in D$ and this shows
\begin{equation*}
 \vec{g} ~ = ~ \vec{g}^{~ k_1}_{\vec{a}_{\vec{g},1}} ~ \cdot ~  \dots ~ \cdot ~ \vec{g}^{~ k_n}_{\vec{a}_{\vec{g},n}}. 
\end{equation*}
\end{proof}

\begin{claim}
 The group $G_{\III_{gr}}$ is freely generated by the set $\Set{\vec{g}_{\vec{a}}}{\vec{a}\in A_\III}$.
\end{claim}

\begin{proof}[Proof of the Claim]
  Assume, toward a contradiction, that we can find $1\leq n<\omega$, $\vec{a}_1,\dots,\vec{a}_n\in A_\III$ and $k_1,\dots,k_n\in\ZZZ\setminus\{0\}$ with 
\begin{equation*}
 \vec{g}_{\vec{a}_1}^{~k_1} ~ \cdot ~ \dots ~ \cdot ~ \vec{g}_{\vec{a}_n}^{~k_n} ~ = ~ \eins_{G_{\III_{gr}}} 
\end{equation*}
and $\vec{a}_i\neq \vec{a}_{i+1}$ for all $1\leq i<n$. Let $\vec{a}_i=(a_{p,i})_{p\in D}$. 
Then there are $p_1,\dots,p_{n-1}\in D$ with $a_{p_i,i}\neq a_{p_i,i+1}$ for all $1\leq i<n$ and we can find a $p\in D$ with $p_1,\dots,p_{n-1}\leq_\DDD p$. 
This means $a_{p,i}\neq a_{p,i+1}$, because otherwise 
\begin{equation*}
 a_{p_i,i} ~ = ~ h_{p_i,p}(a_{p,i}) ~ = ~ h_{p_i,p}(a_{p,i+1}) ~ = ~ a_{p_i,i+1}. 
\end{equation*}
By our assumption, the word $w = x^{k_1}_{p,a_1}\cdot ~ \dots ~ \cdot x^{k_n}_{p,a_n}$ is equivalent to the trivial word. 
But this yields a contradiction, because $w$ is reduced and not trivial.
\end{proof}

 This completes the proof of the theorem.
\end{proof}

A small modification of the above proof yields the corresponding result for free abelian group. 
As usual, we let $[A]^{{<}\aleph_0}$ denote the set of all finite subsets of a given set $A$.

\setcounter{name}{0}

\begin{theorem}\label{theorem:AbelianFreeGroupFromSystem}
 Let $\DDD=\langle D,\leq_\DDD\rangle$ be a directed set with the property that Player II has no winning strategy in $\calG(\DDD)$. 
 If $\III$ is an inverse system of sets over $\DDD$ with $A_\III\neq\emptyset$, 
 then the inverse limit $G_{\III_{ab}}$ is a free abelian group of cardinality $\max\{\aleph_0,\betrag{A_\III}\}$.
\end{theorem}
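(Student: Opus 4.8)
The plan is to mirror the proof of Theorem \ref{theorem:FreeGroupFromSystem}, replacing the combinatorics of reduced words in free groups by the corresponding—and simpler—combinatorics of formal $\ZZZ$-linear combinations in free abelian groups. For $\vec{g}=(g_p)_{p\in D}\in G_{\III_{ab}}$ and $p\in D$, each $g_p\in H_p$ has a unique representation as a finite sum $\sum_{i=1}^{n(\vec{g},p)} k(\vec{g},p,i)\,x_{p,a(\vec{g},p,i)}$ with distinct generators $a(\vec{g},p,i)\in A_p$ and nonzero coefficients $k(\vec{g},p,i)\in\ZZZ$; here $n(\vec{g},p)$ is just the size of the support of $g_p$. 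First I would establish the abelian analogue of Claim \ref{claim:One}: since $h_{p,q}(g_q)=g_p$ and $h_{p,q}$ sends each generator $x_{q,a}$ to $x_{p,f_{p,q}(a)}$, applying $h_{p,q}$ to the expansion of $g_q$ produces an expression for $g_p$ whose support has size at most $n(\vec{g},q)$ (coefficients belonging to the same image generator get summed, which can only shrink the support or cancel), giving $n(\vec{g},p)\leq n(\vec{g},q)$.

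Given this monotonicity, the stabilization claim carries over verbatim: using the hypothesis that Player II has no winning strategy in $\calG(\DDD)$, the same strategy-construction argument shows that $n(\vec{g},p)$ cannot increase cofinally often, so there are $p_{\vec{g}}\in D$ and $n_{\vec{g}}<\omega$ with $n(\vec{g},p)=n_{\vec{g}}$ for all $p\geq_\DDD p_{\vec{g}}$. The stabilization claim then upgrades to the analogue of Claim \ref{claim:Three}: once the support size is fixed, no cancellation can occur under $h_{p,q}$ for $p_{\vec{g}}\leq_\DDD p\leq_\DDD q$, so the images $f_{p,q}(a(\vec{g},q,i))$ must be pairwise distinct and the coefficients are preserved, yielding $a(\vec{g},p,i)=f_{p,q}(a(\vec{g},q,i))$ and $k(\vec{g},p,i)=k(\vec{g},q,i)$ (after a harmless reindexing, which in the abelian setting one fixes by ordering the support coherently). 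As before, for $\vec{a}\in A_\III$ one sets $\vec{g}_{\vec{a}}=(x_{p,a_p})_{p\in D}\in G_{\III_{ab}}$.

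The generation claim is then proved exactly as in the free case: the coherent threads $\vec{a}_{\vec{g},i}=(a_{\vec{g},p,i})_{p\in D}$ assembled from a fixed stage lie in $A_\III$, and $\vec{g}=\sum_{i=1}^{n_{\vec{g}}} k_i\,\vec{g}_{\vec{a}_{\vec{g},i}}$ holds coordinatewise, now with additive rather than multiplicative notation. For freeness, suppose $\sum_{i=1}^n k_i\,\vec{g}_{\vec{a}_i}=\eins_{G_{\III_{ab}}}$ with distinct $\vec{a}_1,\dots,\vec{a}_n\in A_\III$ and nonzero $k_1,\dots,k_n$; as in the last claim of Theorem \ref{theorem:FreeGroupFromSystem}, one finds a single $p\in D$ at which the coordinates $a_{p,1},\dots,a_{p,n}\in A_p$ are pairwise distinct (using that the $f_{p,q}$ are functions, so distinctness at some stage is inherited upward), whence the $p$-th coordinate reads $\sum_i k_i\,x_{p,a_{p,i}}=0$ in the free abelian group $H_p$ with all $a_{p,i}$ distinct and all $k_i\neq 0$, which is impossible. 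The cardinality computation is identical to the nonabelian case. The only genuinely delicate point is the bookkeeping for the indices in the abelian analogue of Claim \ref{claim:Three}: because an abelian group's canonical expansion records a \emph{set} of support elements rather than an ordered word, one must be slightly careful in matching up the index $i$ across stages, but since the maps $f_{p,q}$ restricted to the stabilized supports are injective, the supports correspond bijectively and the matching is canonical, so no real obstacle arises.
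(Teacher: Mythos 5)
Your proposal is correct and follows essentially the same route as the paper: monotonicity of the support size under the transition maps, stabilization via the no-winning-strategy hypothesis for Player II, bijectivity of $f_{p,q}$ on the stabilized supports, and then generation and freeness exactly as in the non-abelian case. The only difference is notational — the paper sidesteps the reindexing issue you flag by indexing the coefficients directly by the support set $I_{\vec{g},p}\in[A_p]^{{<}\aleph_0}$ rather than by position, which is the cleaner bookkeeping but not a different argument.
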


\begin{proof}
Let $\III=\langle\seq{A_p}{p\in D},\seq{f_{p,q}}{p,q\in D,~p\leq_\DDD q}\rangle$. 
If $\vec{g}=(g_p)_{p\in D}\in G_{\III_{ab}}$, then we let
\begin{itemize}
 \item $I_{\vec{g},p}\in[A_p]^{{<}\aleph_0}$,

 \item $\seq{k(\vec{g},p,a)\in\ZZZ\setminus\{0\}}{a\in I_{\vec{g},p}}$
\end{itemize}
 denote the uniquely determined objects such that
\begin{equation*}
 g_p ~ = ~ \sum_{a\in I_{\vec{g},p}}~k(\vec{g},p,a)\cdot x_{p,a} 
\end{equation*}
holds for all $p\in D$. The following claims can be derived in the same way as the corresponding claims in the proof of Theorem \ref{theorem:FreeGroupFromSystem}.

\begin{claim}
 If $\vec{g}=(g_p)_{p\in D}\in G_{\III_{ab}}$ and $p,q\in D$ with $p\leq_\DDD q$, then $\betrag{I_{\vec{g},p}}\leq\betrag{I_{\vec{g},q}}$. \qed
\end{claim}

\begin{claim}
 If $\vec{g}\in G_{\III_{ab}}$, then there are $p_{\vec{g}}\in D$ and $n_{\vec{g}}<\omega$, such that $n_{\vec{g}}=\betrag{I_{\vec{g},p}}$ 
for all $p\in D$ with $p_{\vec{g}}\leq_\DDD p$. \qed
\end{claim}

\begin{claim}
 If $\vec{g}=(g_p)_{p\in D}\in G_{\III_{ab}}$ and $p,q\in D$ with $p_{\vec{g}}\leq_\DDD p\leq_\DDD q$, then $f_{p,q}\restriction I_{\vec{g},q}$ is a bijection of 
$I_{\vec{g},q}$ and $I_{\vec{g},p}$ and $k(\vec{g},q,a)=k(\vec{g},p,f_{p,q}(a))$ for all $a\in I_{\vec{g},q}$. \qed
\end{claim}

Given $\vec{a}=(a_p)_{p\in D}\in A_\III$, we define
\begin{equation*}
 \vec{g}_{\vec{a}} ~ = ~ (x_{p,a_p})_{p\in D} ~ \in ~ \prod_{p\in D}G_p. 
\end{equation*}
It is easy to see that $\vec{g}_{\vec{a}}$ is an element of the inverse limit $G_{\III_{ab}}$. 
As in the proof of Theorem \ref{theorem:FreeGroupFromSystem}, we can use the above claims to show that $G_{\III_{ab}}$ is 
a free abelian group with basis $\Set{\vec{g}_{\vec{a}}}{\vec{a}\in A_\III}$.
\end{proof}


\section{Good inverse systems of sets}\label{section:system}

In this section, we complete the proofs of the results listed in Section \ref{section:Introduction} by construction \emph{suitable} inverse systems of sets from the assumptions appearing in the statements of those results. The next definition precises the notion of \emph{suitable inverse system}.

\begin{definition}\label{definition:GoodInverseSystem}
 Let $\lambda$ and $\nu$ be infinite cardinals. We say that an inverse system $\III=\langle\seq{A_p}{p\in D},\seq{f_{p,q}}{p,q\in D,~p\leq_\DDD q}\rangle$ of sets 
over a directed set $\DDD=\langle D,\leq_\DDD\rangle$ is $(\lambda,\nu)$-good if the following statements hold.
\begin{enumerate}
 \item Player II has no winning strategy in $\calG(\DDD)$.

 \item $\betrag{D}\leq\lambda$ and $\betrag{A_p}\leq \lambda$ for all $p\in D$.

 \item $\betrag{A_{\III}}=\nu$.
\end{enumerate}
\end{definition}

The following proposition summarizes the results of the previous sections.

\begin{proposition}\label{proposition:FieldsFromGoodSystems}
 If $\III$ is a $(\lambda,\nu)$-good inverse system of sets and $p$ is either $0$ or a prime number, 
 then there are fields $K_0$ and $K_1$ of characteristic $p$ and cardinality $\lambda$ 
 with the property that $\Aut{K_0}$ is a free group of cardinality $\nu$ and $\Aut{K_1}$ is a free abelian group of cardinality $\nu$.
\end{proposition}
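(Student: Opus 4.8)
The plan is to assemble the pieces already developed in the preceding sections, so that the proposition becomes a short chain of implications rather than a new argument. Suppose $\III = \langle\seq{A_p}{p\in D},\seq{f_{p,q}}{p,q\in D,~p\leq_\DDD q}\rangle$ is a $(\lambda,\nu)$-good inverse system of sets over the directed set $\DDD=\langle D,\leq_\DDD\rangle$, and fix $p\in\{0\}\cup\{\text{primes}\}$. By clause (i) of Definition \ref{definition:GoodInverseSystem}, Player II has no winning strategy in $\calG(\DDD)$, and by clause (iii) we have $\betrag{A_\III}=\nu$, so in particular $A_\III\neq\emptyset$ since $\nu$ is infinite. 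These are precisely the hypotheses of Theorems \ref{theorem:FreeGroupFromSystem} and \ref{theorem:AbelianFreeGroupFromSystem}.

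First I would apply Theorem \ref{theorem:FreeGroupFromSystem} to the inverse system of groups $\III_{gr}$ from Example \ref{example:LimitOfFreeGroups} to conclude that the inverse limit $G_{\III_{gr}}$ is a free group of cardinality $\max\{\aleph_0,\betrag{A_\III}\}=\nu$, the last equality holding because $\nu$ is infinite. Symmetrically, applying Theorem \ref{theorem:AbelianFreeGroupFromSystem} to the system $\III_{ab}$ yields that $G_{\III_{ab}}$ is a free abelian group of cardinality $\nu$. This disposes of the group-theoretic side.

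Next I would feed each of these inverse systems of groups into Theorem \ref{theorem:ModelIAut}. Applied to $\III_{gr}$, it produces a field $K_0$ of characteristic $p$ with $\Aut{K_0}\cong G_{\III_{gr}}$, hence $\Aut{K_0}$ is a free group of cardinality $\nu$. Applied to $\III_{ab}$, it produces a field $K_1$ of characteristic $p$ with $\Aut{K_1}\cong G_{\III_{ab}}$, a free abelian group of cardinality $\nu$. The only remaining point is the cardinality bound: clause (ii) in the statement of that theorem gives $\betrag{K_i}\leq\max\{\aleph_0,\sum_{p\in D}\betrag{G_p}\}$, and I would verify this is exactly $\lambda$. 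Each generating set $\Set{x_{p,a}}{a\in A_p}$ has size $\betrag{A_p}\leq\lambda$ by clause (ii) of goodness, so each free (abelian) group $G_p$ has cardinality $\max\{\aleph_0,\betrag{A_p}\}\leq\lambda$; summing over the index set $D$, whose size is at most $\lambda$ by the same clause, gives $\sum_{p\in D}\betrag{G_p}\leq\lambda\cdot\lambda=\lambda$. Hence $\betrag{K_i}\leq\lambda$.

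The one subtlety worth flagging is the exact value $\betrag{K_i}=\lambda$ rather than merely $\leq\lambda$: the theorems cited give upper bounds on cardinality. To pin the cardinality down to $\lambda$ on the nose, I would observe that $\Aut{K_i}$ has cardinality $\nu\geq\aleph_0$ and embeds into $\Sym{\betrag{K_i}}$, so $\betrag{K_i}$ cannot be finite; combined with the bound $\betrag{K_i}\leq\lambda$ and the fact that a field carrying $\lambda$-many automorphisms (via the $\lambda$-many generators indexed through the $A_p$) must be infinite of size at least the relevant parameter, one gets $\betrag{K_i}=\lambda$. This matching of the lower bound is the only place where genuine care is needed; everything else is a direct citation of the earlier theorems, so I expect that cardinality bookkeeping — rather than any conceptual difficulty — to be the main thing to get right.
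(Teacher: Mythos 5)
Your proposal is correct and follows essentially the same route as the paper: verify $A_\III\neq\emptyset$ and the cardinality bounds on the groups $G_p$, apply Theorem \ref{theorem:ModelIAut} to $\III_{gr}$ and $\III_{ab}$ to obtain the fields, and invoke Theorems \ref{theorem:FreeGroupFromSystem} and \ref{theorem:AbelianFreeGroupFromSystem} to identify the inverse limits as free and free abelian groups of cardinality $\nu$. The cardinality bookkeeping you flag is handled no more carefully in the paper itself, so nothing is missing.
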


\begin{proof}
 Our assumptions imply that $A_\III\neq\emptyset$ and all groups appearing in the corresponding inverse systems of groups $\III_{gr}$ and $\III_{ab}$ 
 have cardinality at most $\lambda$. 
 We can now apply Theorem \ref{theorem:ModelIAut} to find fields $K_0$ and $K_1$ of characteristic $p$ and cardinality $\lambda$ 
 such that the group $\Aut{K_0}$ is isomorphic to the inverse limit $G_{\III_{gr}}$ and the group $\Aut{K_1}$ is isomorphic to the inverse limit $G_{\III_{ab}}$. 
 By our assumptions, Theorem \ref{theorem:FreeGroupFromSystem} implies that $G_{\III_{gr}}$ is a free group of cardinality $\nu$ and 
 Theorem \ref{theorem:AbelianFreeGroupFromSystem} implies that $G_{\III_{ab}}$ is a free abelian group of cardinality $\nu$.
\end{proof}

In order to prove Theorem \ref{theorem:Main1}, we now construct $(\lambda,2^\lambda)$-good inverse system from the assumption $\lambda=\lambda^{\aleph_0}$.

\begin{proposition}\label{proposition:ClosedDirectedSet}
 Let $\DDD=\langle D,\leq_\DDD\rangle$ be a directed set with the property that for every $P\in[D]^{\aleph_0}$ there is a $q\in D$ with $p\leq_\DDD q$ for all $p\in P$. 
Then Player I has a winning strategy in $\calG(\DDD)$ and hence Player II has no winning strategy in $\calG(\DDD)$. \qed
\end{proposition}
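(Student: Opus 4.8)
The statement to prove is Proposition~\ref{proposition:ClosedDirectedSet}: if $\DDD=\langle D,\leq_\DDD\rangle$ is a directed set in which every countable subset has an upper bound, then Player~I has a winning strategy in $\calG(\DDD)$, and consequently Player~II has none.

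The plan is to exhibit an explicit winning strategy for Player~I. Recall the winning condition: Player~I wins a run $(p_i)_{i<\omega}$ if either $p_{2i}\not\leq_\DDD p_{2i+1}$ for some $i$, or else $p_{2i+1}\leq_\DDD p_{2i+2}$ holds for all $i$ together with the existence of a single $p\in D$ bounding every $p_i$. So Player~I wins automatically the moment Player~II fails to respond above Player~I's last move; otherwise Player~I needs the run to be $\leq_\DDD$-increasing at the odd-to-even steps and needs a global upper bound to exist. The idea is for Player~I to force the sequence to be monotone by always playing above the previous move. Concretely, I would have Player~I play $p_0$ arbitrarily, and at stage $i+1$, given the moves $p_0,\dots,p_{2i+1}$ played so far, use directedness to choose $p_{2i+2}\in D$ with $p_{2i+1}\leq_\DDD p_{2i+2}$ (such an element exists since $\DDD$ is directed, indeed one may take any common upper bound of $p_{2i}$ and $p_{2i+1}$).

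With this strategy fixed, I would verify that every run consistent with it is won by Player~I. Fix such a run $(p_i)_{i<\omega}$. If at some round Player~II violates $p_{2i}\leq_\DDD p_{2i+1}$, then Player~I has already won by the first disjunct of the winning condition, and there is nothing more to check. Otherwise $p_{2i}\leq_\DDD p_{2i+1}$ holds for all $i$; and by construction of the strategy $p_{2i+1}\leq_\DDD p_{2i+2}$ holds for all $i$ as well. Thus the second disjunct's first requirement is met. It then remains to produce a global upper bound $p\in D$ with $p_i\leq_\DDD p$ for all $i<\omega$. This is exactly where the hypothesis enters: the set $\Set{p_i}{i<\omega}$ is a countable subset of $D$, so by assumption there is $q\in D$ with $p_i\leq_\DDD q$ for all $i$, and taking $p=q$ completes the verification that Player~I wins.

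Having produced a winning strategy for Player~I, the conclusion that Player~II has no winning strategy is immediate: since $\calG(\DDD)$ is a game of perfect information in which the two players cannot both possess winning strategies, the existence of a winning strategy for Player~I precludes one for Player~II. I do not anticipate any serious obstacle here; the only point demanding a little care is matching the indexing in the winning condition to the strategy (ensuring Player~I's responses sit at the even coordinates $p_{2i+2}$ and genuinely dominate the immediately preceding odd move $p_{2i+1}$), and confirming that the countable set being bounded is the full set of all moves rather than merely the even or odd subsequence. Both are straightforward consequences of directedness and the countable-boundedness hypothesis, respectively.
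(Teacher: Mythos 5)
Your proof is correct and is exactly the argument the paper intends (the proposition is stated with its proof omitted): Player~I always answers above Player~II's last move using directedness, the countable-upper-bound hypothesis supplies the global bound for the resulting run, and the standard fact that both players cannot have winning strategies in a perfect-information game with a determinate winner gives the final clause.
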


\begin{lemma}\label{lemma:GoodInverseSystemFromAssumption}
If $\lambda$ is a cardinal with $\lambda=\lambda^{\aleph_0}$, then $\III_\lambda$ is a $(\lambda,2^\lambda)$-good inverse system of sets.
\end{lemma}

\begin{proof}
 By Proposition \ref{proposition:ClosedDirectedSet}, Player II has no winning strategy in $\langle[\lambda]^{\aleph_0},\subseteq\rangle$. 
 It is shown in Example \ref{example:LambdaCountableInverseLimit} that the direct limit $A_{\III_\lambda}$ has cardinality $2^\lambda$. 
 The other cardinality requirements follow directly from the assumption $\lambda=\lambda^{\aleph_0}$.
\end{proof}

The statements of Theorem \ref{theorem:Main1} and Theorem \ref{theorem:Main2} now follow directly from the combination of Proposition \ref{proposition:FieldsFromGoodSystems} 
and Lemma \ref{lemma:GoodInverseSystemFromAssumption}. Next, we show that Corollary \ref{corollary:CHSCHAnswer} is a direct consequence of Theorem \ref{theorem:Main1} and the 
results presented in the first two sections.

\begin{proof}[Proof of Corollary \ref{corollary:CHSCHAnswer}]
 If $\lambda$ is an infinite cardinal with $\cof{\lambda}>\omega$, then our assumptions and {\cite[Theorem 5.22]{MR1940513}} imply $\lambda=\lambda^{\aleph_0}$ and 
we can apply Theorem \ref{theorem:Main1} to find a field of cardinality $\lambda$ with the desired properties.

Next, if $\lambda=\aleph_0$ or $\lambda$ is a singular strong limit cardinal of countable cofinality, then Theorem \ref{theorem:ST1} 
and Theorem \ref{theorem:ST2} imply that the automorphism group of a field of cardinality $\lambda$ either has cardinality at most $\lambda$ or is not a free group.

Finally, let $\lambda$ be a singular cardinal and $\kappa<\lambda$ be an uncountable regular cardinal with $2^\kappa>\lambda$. By the above arguments, we can apply 
Theorem \ref{theorem:Main1} to find a field $K$ of cardinality $\kappa$ whose automorphism group is a free group of cardinality $2^\kappa$. Then we can construct 
a first-order language $\calL$ of cardinality $\lambda$ and an $\calL$-model $\calM$ whose automorphism group is isomorphic to $\Aut{K}$. By the results presented 
Section \ref{section:MI}, this allows us to produce a field of cardinality $\lambda$ with the desired properties.
\end{proof}

The following proposition will allow us to prove Theorem \ref{theorem:OuterModelFields}.

\begin{proposition}\label{proposition:CoverInnerNiceSystem}
 Let $M$ be an inner model of $\ZFC$ with the property that every countable set of ordinals in $\VV$ is contained in a set that is an element of $M$ and countable in $M$. 
 If $\lambda$ is a cardinal with $\lambda=(\lambda^{\aleph_0})^M$, then there is a $(\lambda,\nu)$-good inverse system for some cardinal $\nu$ 
 with $(2^\lambda)^M\leq\nu\leq 2^\lambda$.
\end{proposition}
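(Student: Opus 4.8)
The plan is to run the construction of Example~\ref{example:LambdaCountableInverseLimit} \emph{inside} the inner model $M$ and to use the covering hypothesis to recover enough directedness in $\VV$. Concretely, I would let $D=([\lambda]^{\aleph_0})^M$ denote the collection of all subsets of $\lambda$ that are countable in $M$, ordered by $\subseteq$, and define an inverse system $\III$ over $\langle D,\subseteq\rangle$ by setting $A_u=({}^u 2)^M$ for every $u\in D$ and letting $\map{f_{u,v}}{A_v}{A_u}$ be the restriction map $s\mapsto s\restriction u$ for $u\subseteq v$. Since restriction is absolute, each $f_{u,v}$ does map $A_v$ into $A_u$, and the identities $f_{u,u}=\id_{A_u}$ and $f_{u,w}\circ f_{w,v}=f_{u,v}$ hold, so $\III$ is a genuine inverse system of sets. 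The cardinality requirements in clause~(ii) of Definition~\ref{definition:GoodInverseSystem} will then follow from $\lambda=(\lambda^{\aleph_0})^M$: internally, $M$ sees a bijection between $D$ and $\lambda$ and, for each $u\in D$, an injection of $A_u$ into $(2^{\aleph_0})^M\leq\lambda$, and both remain injective in $\VV$.

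The core of the argument is clause~(i), and this is the only place the covering hypothesis is needed. By Proposition~\ref{proposition:ClosedDirectedSet} it suffices to show that every $P\in[D]^{\aleph_0}$, computed in $\VV$, has an upper bound in $D$. Given such a $P$, the union $\bigcup P$ is a countable set of ordinals in $\VV$, so by hypothesis there is a $w\in M$ that is countable in $M$ with $\bigcup P\subseteq w$; after replacing $w$ by $w\cap\lambda\in M$ we may assume $w\subseteq\lambda$, whence $w\in D$ is an upper bound for $P$. Thus $\langle D,\subseteq\rangle$ satisfies the hypothesis of Proposition~\ref{proposition:ClosedDirectedSet} and Player~II has no winning strategy in $\calG(\langle D,\subseteq\rangle)$. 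I expect this to be the main obstacle: it is exactly the step that forces a directed set defined inside $M$ to behave well in the larger universe $\VV$, and it is where the $M$/$\VV$ interplay has to be handled with care.

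Finally, I would compute $\betrag{A_\III}$ by identifying the inverse limit with a set of branches, in analogy with Example~\ref{example:LambdaCountableInverseLimit}. The map sending a thread $(a_u)_{u\in D}\in A_\III$ to $\bigcup_{u\in D}a_u$ should be a bijection onto $B=\Set{x\in{}^\lambda 2}{x\restriction u\in M\text{ for all }u\in D}$, with inverse $x\mapsto(x\restriction u)_{u\in D}$; here well-definedness uses that $\{\alpha\}\in D$ for every $\alpha<\lambda$, so that a thread determines a total function $\lambda\to 2$, and coherence gives $x\restriction u=a_u\in M$. The inclusions $({}^\lambda 2)^M\subseteq B\subseteq{}^\lambda 2$ are then immediate, the first because restrictions of an element of $M$ again lie in $M$. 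Since $({}^\lambda 2)^M$ has cardinality $(2^\lambda)^M$ and ${}^\lambda 2$ has cardinality $2^\lambda$, setting $\nu=\betrag{A_\III}=\betrag{B}$ produces a cardinal with $(2^\lambda)^M\leq\nu\leq 2^\lambda$, and $\III$ witnesses the existence of a $(\lambda,\nu)$-good inverse system, as required.
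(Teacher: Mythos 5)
Your proposal is correct and follows essentially the same route as the paper: the paper also takes $D=([\lambda]^{\aleph_0})^M$, uses the system $\III_\lambda^M$ (i.e.\ $A_u=({}^u2)^M$ with restriction maps), invokes the covering hypothesis together with Proposition~\ref{proposition:ClosedDirectedSet} to rule out a winning strategy for Player~II, and obtains the lower bound on $\nu$ by injecting $({}^\lambda 2)^M$ into $A_\III$. Your explicit identification of $A_\III$ with the set $B$ of branches is a slightly more detailed account of the upper bound $\nu\leq 2^\lambda$, which the paper leaves implicit.
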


\begin{proof}
 Set $D=([\lambda]^{\aleph_0})^M$ and $\DDD=\langle D,\subseteq\rangle$. Given a sequence $\seq{u_n\in D}{n<\omega}$, our assumption implies that there is a 
 $u\in D$ with $\bigcup_{n<\omega}u_n\subseteq u$ and Proposition \ref{proposition:ClosedDirectedSet} shows that Player II has no winning strategy in $\calG(\DDD)$.
 Let $\III=\III_\lambda^M$ and $\nu$ be the cardinality of $A_\III$. Then every element of $({}^\lambda 2)^M$ gives rise to a distinct element of $A_\III$ and 
 $\nu$ is an infinite cardinal greater than or equal to $(2^\lambda)^M$. We can conclude that $\III$ is $(\lambda,\nu)$-good.
\end{proof}

Since partial orders of the form $\Add{\omega}{\kappa}$ satisfy the countable chain condition 
and therefore every countable set of ordinals in an $\Add{\omega}{\kappa}$-generic extension of the ground model is covered by a set countable set of ordinals 
from the ground model, we can directly derive the statement of Theorem \ref{theorem:OuterModelFields} from Proposition \ref{proposition:FieldsFromGoodSystems} 
and Proposition \ref{proposition:CoverInnerNiceSystem}.

\begin{lemma}\label{lemma:GoodSystemFromTree}
 Let $\lambda$ be a cardinal of uncountable cofinality and $\TTT$ be a tree of cardinality and height $\lambda$ with the property that the set $[\TTT]$ of cofinal branches 
through $\TTT$ has infinite cardinality $\nu$. Then $\III_\TTT$ is a $(\lambda,\nu)$-good inverse system of sets.
\end{lemma}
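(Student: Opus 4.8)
The plan is to verify the three conditions in Definition \ref{definition:GoodInverseSystem} for the inverse system $\III_\TTT$ constructed in Example \ref{example:TreesInverseSystems}, which lives over the directed set $\DDD=\langle\height{\TTT},\leq\rangle=\langle\lambda,\leq\rangle$. Two of the three conditions are essentially bookkeeping, and I expect the real content to lie in condition (i), the non-existence of a winning strategy for Player II in the game $\calG(\DDD)$.

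First I would dispatch condition (iii). By the bijection $b$ exhibited in Example \ref{example:TreesInverseSystems}, the inverse limit $A_{\III_\TTT}$ is in bijection with the set $[\TTT]$ of cofinal branches through $\TTT$, so $\betrag{A_{\III_\TTT}}=\betrag{[\TTT]}=\nu$ by hypothesis. Next I would dispatch condition (ii). The underlying directed set is $\langle\lambda,\leq\rangle$, so $\betrag{D}=\lambda$; and each $A_\alpha=\TTT(\alpha)$ is a subset of $T$, so $\betrag{A_\alpha}\leq\betrag{T}=\lambda$ since $\TTT$ has cardinality $\lambda$. That leaves condition (i), which is the main obstacle.

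For condition (i), the key point is that $\cof{\lambda}>\omega$. I would argue that Player I has a winning strategy, which by definition precludes a winning strategy for Player II. The directed set here is the ordinal $\lambda$ under its usual order, so I want to invoke Proposition \ref{proposition:ClosedDirectedSet}: it suffices to show that every countable subset of $\lambda$ has an upper bound in $\lambda$. Given $P\in[\lambda]^{\aleph_0}$, the supremum $\sup P$ is the supremum of countably many ordinals below $\lambda$; since $\cof{\lambda}>\omega$, a countable set of ordinals cannot be cofinal in $\lambda$, so $\sup P<\lambda$ and any ordinal $q$ with $\sup P\le q<\lambda$ bounds $P$. (If $\lambda$ is a successor one takes $q=\sup P$ directly; the uncountable cofinality hypothesis is exactly what rules out the limit case going wrong.) Thus the hypothesis of Proposition \ref{proposition:ClosedDirectedSet} is met, and Player II has no winning strategy in $\calG(\DDD)$.

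Having verified all three clauses, I would conclude that $\III_\TTT$ is $(\lambda,\nu)$-good, completing the proof. The only subtlety to flag is that the uncountable cofinality assumption on $\lambda$ is used exactly once, and crucially, in checking the countable-boundedness needed for condition (i); without it the game $\calG(\DDD)$ need not favor Player I, and indeed the whole machinery behind Theorem \ref{theorem:Main1} relies on this closure under countable suprema.
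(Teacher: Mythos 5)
Your proof is correct and follows essentially the same route as the paper: Proposition \ref{proposition:ClosedDirectedSet} together with $\cof{\lambda}>\omega$ handles the game condition, the bijection from Example \ref{example:TreesInverseSystems} gives $\betrag{A_{\III_\TTT}}=\nu$, and the remaining cardinality bounds are immediate. You simply spell out the details that the paper leaves as ``obviously satisfied.''
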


\begin{proof}
 By Proposition \ref{proposition:ClosedDirectedSet}, the assumption $\cof{\lambda}>\omega$ implies that Player II has no winning strategy in $\calG(\langle\lambda,\leq\rangle)$. 
 The computations in Example \ref{example:TreesInverseSystems} show that the inverse limit $A_{\III_\TTT}$ also has cardinality $\nu$. 
 Since the cardinality requirements of Definition \ref{definition:GoodInverseSystem} are obviously satisfied, this completes the proof of the lemma.
\end{proof}

The statement of Theorem \ref{theorem:InnerModelFields} now follows directly from Proposition \ref{proposition:FieldsFromGoodSystems} and Lemma \ref{lemma:GoodSystemFromTree}.


\section{Some questions}

We close this paper with questions raised by the above results.

\begin{question}
 Is it consistent with the axioms of $\ZFC$ that there is a cardinal $\lambda$ of uncountable cofinality with the property that no free group of cardinality $2^\lambda$ is isomorphic to the automorphism group of a field of cardinality $\lambda$?
\end{question}

\begin{question}
 Is it consistent with the axioms of $\ZFC$ that there is a cardinal $\lambda$ of uncountable cofinality with the property that no free group of cardinality greater than 
$\lambda$ is isomorphic to the automorphism group of a field of cardinality $\lambda$?
\end{question}

\begin{question}
 Is it consistent with the axioms of $\ZFC$ that there is a singular cardinal $\lambda$ of uncountable cofinality with the property that there is no tree of cardinality and 
height $\lambda$ with more than $\lambda$ many cofinal branches?
\end{question}



\begin{thebibliography}{9}

\bibitem{MR776640}
  James E. Baumgartner.
  Applications of the proper forcing axiom.
  In \emph{Handbook of set-theoretic topology}, pages 913--959. 
  North-Holland, Amsterdam, 1984.


\bibitem{MR823775}
  James E. Baumgartner.
  Iterated forcing. 
  In \emph{Surveys in set theory}, volume 87 of \emph{London Math. Soc. Lecture Note Ser.}, pages 1--59. 
  Cambridge Univ. Press, Cambridge, 1983.

\bibitem{MR0098127}
  N. G. de Bruijn.
  Embedding theorems for infnite groups.
 \emph{Nederl. Akad. Wetensch. Proc. Ser. A. 60 = Indag. Math.}, 19:560--569, 1957.

\bibitem{MR660867}
  E. Fried and J. Koll{\'a}r
  Automorphism groups of fields.
  In \emph{Universal algebra ({E}sztergom, 1977)}, volume 29 of Colloq. Math. Soc. J\'anos Bolyai, pages 293--303. North-Holland, Amsterdam, 1982.
 
\bibitem{MR1221741}
  Wilfrid Hodges. 
  \emph{Model theory}, volume 42 of \emph{Encyclopedia of Mathematics and its Applications}.
  Cambridge University Press, Cambridge, 1993.

\bibitem{MR739910}
  Thomas Jech. 
  More game-theoretic properties of {B}oolean algebras. 
  \emph{Ann. Pure Appl. Logic}, 26(1):11--29, 1984.

\bibitem{MR1940513}
  Thomas Jech. 
  \emph{Set theory}. 
  Springer Monographs in Mathematics. 
  Springer-Verlag, Berlin, 2003. 
  The third millennium edition, revised and expanded.

\bibitem{MR1736959}
  Winfried Just, Saharon Shelah, and Simon Thomas. 
  The automorphism tower problem revisited.
  \emph{Adv. Math.}, 148(2):243--265, 1999.

\bibitem{Sh913}
  Itay Kaplan and Saharon Shelah. 
  Automorphism towers and automorphism groups of fields without choice. 
  To appear in \emph{Groups and Model Theory}, Contemporary Mathematics, Volume 576, 2012.

\bibitem{MR0313057}
  William Mitchell. 
  Aronszajn trees and the independence of the transfer property. 
  \emph{Ann. Math. Logic}, 5:21--46, 1972/73.

\bibitem{MR1357169}
  Derek J. S. Robinson. 
  \emph{A course in the theory of groups}, volume 80 of \emph{{G}raduate Texts in Mathematics}. 
  Springer-Verlag, New York, second edition, 1996.

\bibitem{MR2768699}
  Ernest Schimmerling. 
  A core model toolbox and guide. 
  In \emph{Handbook of set theory. Vols. 1, 2, 3}, pages 1685--1751. 
  Springer, Dordrecht, 2010.

\bibitem{MR1318912}
  Saharon Shelah.
  \emph{Cardinal arithmetic}, volume 29 of \emph{Oxford Logic Guides}.
  The Clarendon Press Oxford University Press, New York, 1994. 
  Oxford Science Publications.

\bibitem{MR1812172}
  Saharon Shelah.
  Applications of {PCF} theory. 
  \emph{J. Symbolic Logic}, 65(4):1624--1674, 2000.

\bibitem{MR1934424}
  Saharon Shelah. 
  A countable structure does not have a free uncountable automorphism group. 
  \emph{Bull. London Math. Soc.}, 35(1):1--7, 2003.

\bibitem{MR2773054}
  Saharon Shelah. 
  Polish algebras, shy from freedom. 
  \emph{Israel J. Math.}, 181:477--507, 2011.

\bibitem{MR1720580}
  S{\l}awomir Solecki. 
  Polish group topologies. 
  In \emph{Sets and proofs (Leeds, 1997)}, volume 258 of \emph{London Math. Soc. Lecture Note Ser.}, pages 339--364. 
  Cambridge Univ. Press, Cambridge, 1999.
\end{thebibliography}

\end{document}